\theoremstyle{plain}%
\newtheorem{theorem}{Theorem}[section]
\newtheorem{lemma}[theorem]{Lemma}
\newtheorem{corollary}[theorem]{Corollary}
\theoremstyle{definition}
\newtheorem{definition}[theorem]{Definition}
\newtheorem{example}[theorem]{Example}
\theoremstyle{remark}
\newtheorem{remark}[theorem]{Remark}
\title{Exact Hausdorff measures of Cantor sets}
\author{Malin Palö Forsström}
\thanks{University of Gothenburg and Chalmers University of Technology, Gothenburg, Sweden. E-mail: \texttt{palo@chalmers.se}}
\date{\today}
\definecolor{note_linecolor}{rgb}{0.5,0.5,0.5}
\definecolor{note_bordercolor}{RGB}{246, 104, 178}
\definecolor{note_backgroundcolor}{RGB}{250, 167, 200}
\begin{document}

\maketitle

\begin{abstract}
Cantor sets in \(\mathbb{R}\) are common examples of sets for which Hausdorff measures can be positive and finite. However, there exist Cantor sets for which no Hausdorff measure is supported and finite. The purpose of this paper is to try to resolve this problem by studying an extension of the Hausdorff measures \( \mu_h\) on \(\mathbb{R}\), allowing gauge functions to depend on the midpoint of the covering intervals instead of only on the diameter. As a main result, a theorem about the Hausdorff measure  of any regular enough Cantor set, with respect to a chosen gauge function, is obtained.
\end{abstract}

\section{Introduction.}

Felix Hausdorff, in his paper \textit{Dimension und äu\ss eres Ma\ss} from 1918, as translated by Sawhill in the book \textit{Classics on Fractals} \cite{classics}, made the following definition.
\begin{definition} \label{def:hausdorff_measure_def_by_Hausdorff}
  Let \( \mathcal{U} \) be a system of bounded sets \( U \) in a \( q \)-dimensional space having the property that one can cover any set \( A \) with an at most countable number of sets \( U \) from \( \mathcal{U} \) having arbitrarily small diameters \(  |U| \). Let \( h:\mathcal{U} \to [0,\infty) \) be a set function. Denote by 
  \begin{equation*}
    \mu_{\mathcal{U},h}^\delta (A) = \inf \sum h(U_n)
  \end{equation*}
   where the infinum runs over all countable subsets \( \{U_n\} \) of \( \mathcal{U} \) such that \( \cup U_n \) covers \( A \) and \( |U_n|<\delta \) for all \( n \). If \( \mathcal{U} \) is the set of Borel sets then \( \mu_{\mathcal{U},h} (A) = \lim_{\delta \to 0} \mu_{\mathcal{U},h}^\delta (A) \) is a measure. If \( h(U) \) is continuous or \( h(U)=h(\bar U ) \), then \( \mu_{\mathcal{U},h} \) is an outer measure.
\end{definition}
In \( \mathbb{R} \), with which we will be concerned, a common choice is to take \( \mathcal{U} \) to be the set of all intervals and to restrict the choice of the set function  \( h \) to interval functions depending on only the diameter of the sets on which it is applied. In this paper, we will use a definition somewhat closer to the original definition made by Hausdorff.
   
Let \( I(w,\delta) \) denote the interval with midpoint \( w \) and diameter \( \delta \).  If \( I = I(w,\delta)\) and there is no risk for confusion, we sometimes write \( h(I) \) instead of \( h(w, \delta) \). By a \( \delta \)-cover of a set \( E \) we will mean a collection of sets of diameter at most \( \delta \) whose union contain \( E \). Using these notations we can formulate the definition of Hausdorff measures on \( \mathbb{R} \)  which we will use. This definition differs from definitions previously used in the context of Cantor sets in \( \mathbb{R} \) in that the gauge function is allowed to depend not only on the diameter of the covering intervals, but also on their midpoints.
\begin{definition} \label{def:haussdorffmeasure}
  Let \( h : \mathbb{R} \times \mathbb{R}_+ \to \mathbb{R}_+ \) be a continuous function with \\\({ \lim_{\delta \to 0} h(w, \delta) = 0 }\) for all \( w\in \mathbb{R} \)  which is increasing as an interval function. Then the Hausdorff measure  of the set \( E \subseteq \mathbb{R} \) with respect to the gauge function \( h \) is defined by 
  \begin{equation*}
     \mu_h (E) = \lim_{\delta \to 0} \inf \left\{ \sum h(w_k,\delta_k), \; \textrm{ where } \{  I(w_k, \delta_k) \} \textrm{ is a } \delta\textrm{-covering of } E \right\}.
  \end{equation*}
\end{definition}
The function \( h \) in the definition above will be called the \textit{gauge function} associated with the measure \( \mu_h \) and \( \mu_h \) will be called the Hausdorff measure associated with the gauge function \( h \). Moreover, any function with the properties above will be called a gauge function.  The fact that the measure in Definition~\ref{def:haussdorffmeasure} is a well-defined outer measure follows from Definition~\ref{def:hausdorff_measure_def_by_Hausdorff}, even if the assumption on \( h \) being continuous is dropped. When the sets we want to measure are subsets of  \( \mathbb{R} \), we get a definition equivalent to definition~\ref{def:haussdorffmeasure} if we consider only coverings by intervals. Also, it can be shown (see e.g.~\cite{hausdorffmeasures}) that the resulting measure does not depend on whether the sets considered in the covering in the definition above are open or closed.

The reason for using the definition above instead of the more common definition requiring that \( h(w, \delta)  \) does not depend on \( w \), is that given this restriction makes it possible to find  Hausdorff measures which are finite and supported on a given Cantor set for Cantor sets for which this, using the more restrictive definition, is not possible. Also, assuming Lipschitz continuity of \( h(w, \delta) \) in the first argument, only small adaptions of the corresponding proofs for the case \( h(w,\delta) = h(\delta) \) (see e.q.~\cite{mattila}) are needed to show that results such as Frostman's lemma and common density bounds hold also in this setting\cite{malin}.

By a Cantor set in \( \mathbb{R} \) we mean a subset of \( \mathbb{R} \) which is compact, perfect and totally disconnected. Given the notation we will use throughout this paper, this definition translates as follows.
\begin{definition}
Let \[ C = \lim \limits_{n \to \infty} \bigcap \limits_{k=0}^n \bigcup_{j \in \{ 0,1 \}^k} I_j \] 
  where \( \mathcal{I} = \{ I_j \}_{j \in \{ 0,1 \}^k\, k = 0,1,2,\ldots }\) is a collection of nonempty closed intervals. Let \( j0 \) denote the concatenation of the two binary words \( j \) and \( 0 \), and \(j1\) denote the concatenation of the binary words \( j \) and \(1\). If for all \( I_j \in \mathcal{I} \),
  \begin{itemize}
    \item \( I_{j0} \cap I_{j1} = \emptyset \)
    \item \( I_{j0} \cup I_{j1} \subseteq I_j \) and
    \item \( I_{j0} \) and \( I_j \) have the same left endpoint and \( I_{j1} \) and \( I_j \) have the same right endpoint
  \end{itemize}
we say that \( C \) is a Cantor set, and write \( C \sim \{ I_j \} \).
\end{definition}

The intervals \(  I_j = I(w_j, \delta_j) \) appearing in the construction of a Cantor set \( C \) will be called the \emph{basic intervals} associated with the Cantor set.  Moreover, any interval whose left endpoint is the left endpoint of a basic interval and whose right endpoint is a right endpoint of a basic interval will be called a \emph{near~basic interval} associated with \( C \). We use \( G_j \) to denote the open interval \(  I_j \backslash \left( I_{j0} \cup I_{j1} \right) \), and say that \( G_j \) is a gap associated to the Cantor set \( C \sim \{ I_j \} \). When \( I = I(w, \delta) \) is an interval and \( a>0 \), we will write \( aI \) to denote the interval \( I(w, a\delta)\), i.e.\ we write
\[
aI = aI(w,\delta) = I(w,a\delta)
\]

The unique probability measure \( \nu \) satisfying \( \nu (I_{j0}) = \nu (I_{j1}) = \frac{1}{2} \, \nu(I_j) \)  for all binary words \( j \) is called the \textit{Cantor measure} associated with the Cantor set \( C \sim \{ I_j \} \). The fact that the Cantor measure is a well defined measure follows by Proposition 1.7 in \cite{falconer2}. More generally, a measure which non-trivial and finite and supported on a given set \( E \) is called a \textit{mass distribution} on~\( E \).

When \( j_1 \) and \( j_2 \) are two binary words, \( j_1j_2 \)  will denote their concatenation. Also, \( 0^m \) will be used throughout this text to denote the binary word which consists of \( m \) zeros. \( 1^m \)  is defined analogously. 

In this paper we will almost exclusively use binary words to enumerate the elements of the construction of a Cantor set. However, an alternative notation, which is simpler in some situations, is to use \( I_l^k \) to represent the \( l \)th interval in the \( k \)th construction step. If \( j \) is a binary word and we let \( j_{10} \) be the integer we get if converting \( j \) when considered as a binary number to base 10, we can convert between the two notations by \( I_j = I_{j_{10}}^{|j|} \). Similarly \( G_j = G_{j_{10}}^{|j|} \). We will only use this notation in examples~\ref{ex:1} to \ref{ex:3} and in the proof of Corollary~\ref{cor:the_cor}.

\section{Main results.}
Small adaptions of the standard methods for calculating Hausdorff measures of Cantor sets (see e.g.~\cite{mattila}, pp.~60-63) now yields the first of the two theorems below, which shows that many of the Hausdorff measures as defined in this paper are mass distributions on some Cantor sets. This fact motivates the use of this definition, as it extends the family of Cantor sets whose dimension we understand, in the sense of which gauge functions yield mass distributions on the sets through its associated Hausdorff measure. Similar results, but with less strict bounds, can easily be obtained when the ratio of \( h(I_j) \) and \( \nu(I_j) \) is bounded from above and below away from zero.

\begin{theorem} \label{thm:Lminthan1}
Let \( h \) be any gauge function and suppose there exists a constant \( D \)  such that \( D \cdot h(w,\delta)>h(w,2\delta) \) for all \( w \) and \( \delta \). Let \( C \sim \{ I_j \}\) be a Cantor set such that \( 2\max \bigl\{ |I_{j0}|,|I_{j1}| \bigr\} \leq |I_j| \)  and assume there exist two constants \( q \) and \( r \) such that   \(q \cdot \nu(I_j) \leq h(I_j) \leq r\cdot  \nu(I_j)  \), where \( \nu \) is the Cantor measure associated with \( C \). Then \( \mu_h \) is a mass distribution on \( C \).  Further, for any interval \( J \subseteq [0,1] \), \(  q/2D^2 \cdot \nu(J) \leq \mu_h (J \cap C) \leq r \cdot \nu (J) \)
\end{theorem}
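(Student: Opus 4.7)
The proof splits into an upper and a lower bound. \emph{Upper bound.} For each $n$ I would cover $J\cap C$ by the basic intervals $\{I_j:|j|=n,\,I_j\cap J\neq\emptyset\}$; the hypothesis $2\max\{|I_{j0}|,|I_{j1}|\}\le|I_j|$ forces $|I_j|\le 2^{-n}|I_\emptyset|$, so this is a $\delta_n$-cover with $\delta_n\to 0$. Using $h(I_j)\le r\nu(I_j)$, the total $h$-weight is at most $r\sum\nu(I_j)$; at most two intervals of the cover overhang an endpoint of $J$ (each contributing $r\cdot 2^{-n}\to 0$), while the remaining intervals lie inside $J$ and contribute at most $r\nu(J)$. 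Letting $n\to\infty$ gives $\mu_h(J\cap C)\le r\nu(J)$.

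\emph{Lower bound.} I would use a Frostman-type argument: if every interval $I$ of sufficiently small diameter satisfies
\begin{equation*}
\nu(I)\le \tfrac{2D^2}{q}\,h(I),
\end{equation*}
then for any $\delta$-cover $\{U_k\}$ of $J\cap C$ one has $\nu(J)\le\sum_k\nu(U_k)\le\tfrac{2D^2}{q}\sum_k h(U_k)$, and taking the infimum gives $\mu_h(J\cap C)\ge\tfrac{q}{2D^2}\nu(J)$. To establish the pointwise bound I assume $I\cap C\neq\emptyset$ and (replacing $I$ by $I\cap I_*$, which leaves $\nu(I)$ unchanged and does not increase $h(I)$) that $I\subseteq I_*$, where $I_*$ is the deepest basic interval with $I\cap C\subseteq I_*$. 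By maximality $I\cap C$ meets both children $I_{*0},I_{*1}$, so $I\supseteq G_*$; combined with $|I_{*0}|,|I_{*1}|\le|I_*|/2$, the midpoint of $I_*$ lies in $\overline{G_*}\subseteq I$. I then produce at most two basic intervals $I_{j_1},I_{j_2}$ covering $I\cap C$ with $|I_{j_l}|\le 2|I|$ and midpoints in $I$: if $|I_*|\le 2|I|$ I take $I_{j_1}=I_{j_2}=I_*$; otherwise, for each $i\in\{0,1\}$ I let $J_i$ be the deepest basic interval inside $I_{*i}$ containing the Cantor points of the tongue $T_i:=I\cap I_{*i}$ and set $I_{j_1}=J_0$, $I_{j_2}=J_1$. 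A short geometric argument yields $|J_i|\le 2|T_i|\le 2|I|$, and the midpoint of $J_i$ lies in $\overline{G_{J_i}}\subseteq T_i\subseteq I$ by the same gap-centering argument applied one level deeper.

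Once the two basic intervals are in hand, the size and midpoint conditions imply $I_{j_l}\subseteq 4I$, where $4I$ denotes the interval of diameter $4|I|$ centered at the midpoint of $I$; two applications of the doubling hypothesis give $h(I_{j_l})\le h(4I)\le D^2 h(I)$. Combined with $h(I_{j_l})\ge q\,\nu(I_{j_l})$,
\begin{equation*}
\nu(I)\le\nu(I_{j_1})+\nu(I_{j_2})\le\tfrac{1}{q}\bigl(h(I_{j_1})+h(I_{j_2})\bigr)\le\tfrac{2D^2}{q}h(I),
\end{equation*}
as required.

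The hard step is the geometric inequality $|J_i|\le 2|T_i|$. It hinges on the fact that the endpoint of $I_{*i}$ adjacent to $G_*$ is itself a Cantor point, which forces $J_i$ to share that endpoint with $I_{*i}$; combined with $T_i$ meeting the other child of $J_i$ (forced by the maximality of $J_i$) and with each child of $J_i$ having diameter at most $|J_i|/2$, the bound follows by a direct algebraic rearrangement. A small number of degenerate configurations (for instance when $T_i$ fills all of $I_{*i}$, in which case $J_i=I_{*i}$) would need separate verification.
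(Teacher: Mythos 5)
Your proposal is correct and follows essentially the approach the paper itself relies on: the paper does not write out a proof of this theorem but defers to the standard method (Mattila, pp.~60--63), which is exactly your argument --- the upper bound by covering $J\cap C$ with level-$n$ basic intervals (identical to the upper-bound computation the paper gives for Theorem~\ref{thm:general-1-thm}), and the lower bound via the mass distribution principle after showing every small interval meeting $C$ is controlled by at most two basic intervals of comparable diameter and nearby midpoint, two doublings giving the constant $2D^2$. The degenerate configurations you flag (a tongue meeting $C$ in a single point, or filling its parent) are genuinely trivial --- $\nu$ is non-atomic, so they contribute nothing --- and do not affect the argument.
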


While the previous theorem gives satisfactory information about the (local) dimension of a Cantor set (through gauge functions), it does not give specific information about the exact measure of any Cantor set. This is the main purpose of our main result, the theorem below, which, especially in the case \( r=q \), gives more explicit information about Hausdorff measures of Cantor sets, both globally and locally.

\begin{theorem} \label{thm:general-1-thm}
  Let \( J \subseteq [0,1] \) be any closed interval and let \( \varepsilon > 0 \) be a small positive number. Further let \( h \) be a gauge function and \( C \sim \{ I_j \}  \) be a Cantor set for which the following assumptions hold:
  \renewcommand{\labelenumi}{\textsc{\roman{enumi}}.}
  \begin{enumerate}
    \item for any fixed $w$ and small enough $\delta$ with \( I(w, \delta) \subseteq (1 + \varepsilon) \cdot J \) we have\footnote{We will throughout this paper use subindices to denote derivates, s.t. for example \( h_{11} (w, \delta) = \frac{\partial^2 h}{\partial w^2} (w, \delta) \).}
\begin{equation}
-h_{11}(w,\delta) + 4h_{22}(w,\delta) \leq 0
\label{eq:h1}
\end{equation}
      and
\begin{equation}
 h_{11}(w,\delta) +  4h_{12}(w,\delta) + 4h_{22}(w,\delta)\leq 0
\label{eq:h2}
\end{equation}

    \item for all long enough binary words \( j \) with \( I_j \subseteq (1 + \varepsilon) \cdot J \) and all \( m \in \mathbb{N} \)  the following inequality holds 
\begin{equation*}
\displaystyle \frac{1}{2^m} \leq \frac{|G_j \cup I_{j10^m}|}{|G_j \cup I_{j1}|} 
\end{equation*}

    \item there exist two positive numbers \( q \) and \( r \)  such that  for all long enough binary words \( j \)  with \( I_j \subseteq (1 + \varepsilon) \cdot J \) the following pair of inequalities hold
\begin{equation}
q \cdot  \nu (I_j) \leq h(I_j) \leq r \cdot \nu (I_j)
\label{eq:qrinequality}
\end{equation}

  \end{enumerate}
  Then 
\begin{equation}
\bigl( q - (r-q) \bigr) \cdot \nu(J)\leq \mu_h (J \cap C) \leq r \cdot \nu(J)
\label{eq:important}
\end{equation}
\end{theorem}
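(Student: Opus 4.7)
The plan sandwiches $\mu_h(J\cap C)$ by separately establishing the upper and lower bounds.

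The upper bound is the standard basic-interval argument: for each large $n$, the family $\mathcal{F}_n = \{I_j : |j|=n,\ I_j \cap J \ne \emptyset\}$ is a $\delta_n$-cover of $J\cap C$ with $\delta_n \to 0$, and all but at most two of its members lie inside $(1+\varepsilon)J$ once $n$ is large. Assumption \eqref{eq:qrinequality} yields $\sum_{\mathcal{F}_n} h(I_j) \le r \sum_{\mathcal{F}_n} \nu(I_j) + o(1) \le r\nu(J) + o(1)$, and passing to the limit gives $\mu_h(J\cap C) \le r\nu(J)$.

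For the lower bound I plan to establish the pointwise key inequality
\begin{equation*}
h(\tilde J) \ge (2q-r)\,\nu(\tilde J) \qquad (\star)
\end{equation*}
for every near-basic interval $\tilde J \subseteq (1+\varepsilon)J$. Granted $(\star)$, an arbitrary $\delta$-cover $\{K_k\}$ of $J\cap C$ is converted into a near-basic cover $\{\tilde K_k\}$ by letting $\tilde K_k$ be the smallest near-basic interval containing $K_k\cap C$. Assumption (ii) is precisely designed to give a uniform ratio bound $|\tilde K_k|/|K_k| \le C$, so continuity of $h$ produces $\sum h(\tilde K_k) \le (1+o(1))\sum h(K_k)$ as $\delta\to 0$. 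Applying $(\star)$ to the near-basic cover then gives $\sum h(K_k) \ge (2q-r)\nu(J) - o(1) = (q-(r-q))\nu(J) - o(1)$, as required.

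I plan to prove $(\star)$ by induction on the depth of $\tilde J$, defined as the largest $|j|$ for which $\tilde J \subseteq I_j$. The base case $\tilde J = I_j$ is immediate from \eqref{eq:qrinequality} since $r \ge q$. For the inductive step let $I_{j^*}$ be the shallowest basic interval strictly containing $\tilde J$. If $\tilde J$ is contained in $I_{j^*0}$ or $I_{j^*1}$ the induction hypothesis at depth $|j^*|+1$ applies directly; otherwise $\tilde J = \tilde J_0 \cup G_{j^*} \cup \tilde J_1$ straddles the gap, with $\tilde J_0 \subseteq I_{j^*0}$ and $\tilde J_1 \subseteq I_{j^*1}$ near-basic sub-intervals. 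Parametrize this configuration by the two free endpoints $a_0$ (left of $\tilde J_0$) and $b_1$ (right of $\tilde J_1$), the two endpoints of $G_{j^*}$ being held fixed. The chain rule with $w_{\tilde J} = (a_0+b_1)/2$ and $\delta_{\tilde J} = b_1 - a_0$ (and analogous relations for $\tilde J_0$ and $\tilde J_1$) translates \eqref{eq:h1} and \eqref{eq:h2} into negative semidefiniteness of the Hessian of $F(a_0,b_1) := h(\tilde J) - (2q-r)\nu(\tilde J)$ on the admissible rectangle. Concavity reduces $(\star)$ to verifying $F \ge 0$ at the four extreme corners: three are degenerate (one of $\tilde J_0, \tilde J_1$ shrinks to a point and $(\star)$ reduces to the shallower induction hypothesis), and at the full corner $\tilde J = I_{j^*}$ one has $F \ge q\nu(I_{j^*}) - (2q-r)\nu(I_{j^*}) = (r-q)\nu(I_{j^*}) \ge 0$ by \eqref{eq:qrinequality}.

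The hardest part is the Hessian calculation and is the reason both \eqref{eq:h1} and \eqref{eq:h2} are needed: I expect \eqref{eq:h2} to control the one-sided shrinkings $\partial_{a_0}^2 F$ and $\partial_{b_1}^2 F$, while \eqref{eq:h1} supplies the bound on the mixed partial $\partial_{a_0}\partial_{b_1} F$ required to make the $2\times 2$ matrix negative semidefinite. A secondary subtlety lies in the step-function behavior of $\nu(\tilde J)$ as $a_0, b_1$ cross basic-interval endpoints: $\nu$ is monotone and piecewise constant in these parameters, so the smooth concavity of the $h$-part combines with the monotone jumps of the $\nu$-part to yield the desired piecewise inequality, and assumption (ii) must be reverified to provide a genuinely depth-independent ratio bound in the reduction step.
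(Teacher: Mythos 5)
Your upper bound and your overall architecture for the lower bound (prove a pointwise inequality \( h(\tilde J)\ge (2q-r)\,\nu(\tilde J) \) for near-basic intervals, then cover) match the paper, but the core of the lower bound has genuine gaps. The most serious one is that you have misidentified the role of assumption (ii). It is not, and cannot be made into, a uniform ratio bound \( |\tilde K_k|/|K_k|\le C \) for a covering reduction — and no such bound is needed anyway, since \( h \) is increasing as an interval function and one can always find a near-basic \( \tilde K\subseteq K \) with \( K\cap C\subseteq\tilde K \), so \( h(K)\ge h(\tilde K) \) and \( \nu(\tilde K)\ge\nu(K) \) directly (the paper just invokes the mass distribution principle). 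The actual content of assumption (ii), established in Lemma~\ref{lem:staircase}, is that the devil's staircase \( \rho\mapsto\nu\bigl(\rho\cdot_L(G_j\cup I_{j1})\bigr) \) lies below its chord \( \rho\,\nu(I_{j1}) \). This is the indispensable bridge between the concavity estimate on \( h \) (which is linear in the \emph{length} proportion \( \rho \)) and the required lower bound in terms of the \emph{\( \nu \)-measure} of the peeled-off piece \( J_1 \). Your proof never uses (ii) where it is needed, and this is also exactly the point where your "secondary subtlety" about the step-function behaviour of \( \nu \) becomes fatal: \( F=h-(2q-r)\nu \) is a smooth function minus a monotone step function of the endpoints, hence not concave, so the reduction of \( (\star) \) to its values at the four corners of the rectangle is invalid.

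Two further problems. First, conditions \eqref{eq:h1} and \eqref{eq:h2} do not give negative semidefiniteness of the Hessian of \( h \) in the endpoint parameters: writing \( f(t_0,t_1)=h(w-t_0+t_1,\delta+2t_0+2t_1) \) they yield only \( f_{t_0t_1}\le 0 \) and \( f_{t_1t_1}\le 0 \); the missing diagonal entry would require the symmetric condition \( h_{11}-4h_{12}+4h_{22}\le 0 \) of Remark~\ref{rem:reverse}, and even then no determinant condition is available. The paper's argument is deliberately one-sided to avoid this: it fixes the left endpoint, expands only to the right (so only \( f_{t_1t_1}\le 0 \) is needed for concavity along that ray, and \( f_{t_0t_1}\le 0 \) to compare the expansion of \( J_0 \) with that of \( I_{j0} \)), peels off the right piece \( J_1 \), and recurses into the left piece \( J_0 \). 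Second, your induction on depth is not well-founded: the step for a depth-\( d \) near-basic interval appeals to the statement for its strictly deeper pieces, and since near-basic intervals of unbounded depth occur, no base case is ever reached. The paper closes this infinite regress not by induction but by iterating \( h(I)\ge h(J_0)+(q-(r-q))\nu(J_1) \) indefinitely and using \( h(w,\delta)\to 0 \) as \( \delta\to 0 \) to kill the residual term. As written, your plan for \( (\star) \) would not go through without importing Lemma~\ref{lem:staircase}, abandoning the two-parameter concavity reduction in favour of the one-sided peeling, and replacing the induction by the limiting argument.
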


When the gauge function \( h \) only depends on the diameter of the covering intervals, i.e.\ when the gauge function is of the form \(  h(\delta) \), the first of the three assumptions above simplifies into  \( h \) being concave. This is a reasonable requirement since for the arguably most commonly studied Hausdorff measures in the context of Cantor sets; the Hausdorff measures associated to the gauge functions \( h(\delta) = \delta^\alpha \), the corresponding gauge function is concave for \( \alpha \in (0,1) \).

Also the second assumption simplifies in special cases. A well studied subset of the set of all Cantor sets in \( \mathbb{R} \) is the Cantor sets with so called decreasing gap sequences. We say that \( C \sim \{ I_j \} \) has a  decreasing gap sequence if \( |G_l^k| \leq |G_{l'}^{k'}|\) when \( 2^{k'}+{l'} < 2^k+l  \). When using this notation, by assumption we have
\[
|I_l^k| = \sum_ {n=0}^\infty \sum_{m=0}^{2^n} |G^{k+n}_{2^{n} l+m}| \qquad \text{and} \qquad |I_{l'}^{k'}| = \sum_ {n=0}^\infty \sum_{m=0}^{2^{n}} |G^{k'+n}_{2^{n} l'+m}| 
\]
and 
\[ 
2^{(k'+n)}+\left( 2^nl'+m\right) = 2^n(2^{k'}+l') + m < 2^n(2^k+l) + m = 2^{(k+n)}+\left( 2^nl+m\right)
\]
implying that \( |G^{k+n}_{2^{n} l+m}|  \leq  |G^{k'+n}_{2^{n} l'+m}|  \) for any fixed $m$ and $n$ when \( 2^{k'}+{l'} < 2^k+l  \). Comparing the two double sums above termwise, we see that this implies $|I_l^k| \leq |I_{l'}^{k'}| $, which means the interval sequence is decreasing in the same sense as the gap sequence is. This gives
\begin{equation*}
\begin{split}
|I_{j1}| =& \sum_{k \in \{ 0,1 \}^m} |I_{j1k}| + \sum_{j=0}^{m-1} \sum_{m\in \{ 0,1 \}^l} |G_{j1l}| 
\\
\leq& \sum_{k \in \{ 0,1 \}^m} |I_{j10^m}| + \sum_{j=0}^{m-1} \sum_{m\in \{ 0,1 \}^l} |G_{j}| 
\\[1ex]=&
\;\;\; 2^m|I_{j10^m}| + \left( 2^{m}-1 \right) |G_j|
\end{split}
\end{equation*}
Rearranging the terms above, we get
\[
\frac{1}{2^m} \leq \frac{|G_j| + |I_{j10^m}| }{|G_j|+|I_{j1}|}
\]
i.e.\ the second assumption of the theorem is satisfied for any Cantor set whose gap sequence is decreasing. This observation, together with the previous observation, yields the following corollary.

\begin{corollary} \label{cor:the_cor}
Let \( J \subseteq [0,1] \) be any closed interval and let \( \varepsilon > 0 \) be a small positive number. Further let \( h(\delta) \) be a concave gauge function and \( C \sim \{ I_j \}  \) be a Cantor set associated to a decreasing gap sequence for which  there exist two positive numbers \( q \) and \( r \)  such that  for all long enough binary words \( j \)  with \( I_j \subseteq (1 + \varepsilon) \cdot J \)  
\begin{equation*}
q \cdot  \nu (I_j) \leq h(I_j) \leq r \cdot \nu (I_j).
\end{equation*}
Then 
\begin{equation*}
\bigl( q - (r-q) \bigr) \cdot \nu(J)\leq \mu_h (J \cap C) \leq r \cdot \nu(J).
\end{equation*}
\end{corollary}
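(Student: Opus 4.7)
The plan is simply to verify that the three hypotheses of Theorem~\ref{thm:general-1-thm} are all satisfied under the hypotheses of the corollary, and then invoke the theorem directly. Each verification amounts to a specialization that was, for the most part, already carried out in the discussion immediately preceding the corollary.

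First I would check assumption (i). Since the gauge function $h$ depends only on the diameter $\delta$, all partial derivatives involving the midpoint coordinate vanish, i.e.\ $h_1 \equiv 0$, $h_{11} \equiv 0$, and $h_{12} \equiv 0$. The two inequalities \eqref{eq:h1} and \eqref{eq:h2} therefore both collapse to the single requirement
\[
4 h_{22}(w,\delta) = 4 h''(\delta) \leq 0,
\]
which is exactly the concavity of $h$ as a function of $\delta$. Hence assumption (i) holds for every $w$ and every sufficiently small $\delta$ whenever $h(\delta)$ is a concave gauge function.

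Next I would observe that assumption (ii) has been verified, for any Cantor set whose gap sequence is decreasing, in the displayed computation immediately preceding the statement of the corollary: using the fact that decreasing gap sequences force the sequence of basic interval lengths to be decreasing as well, one estimates
\[
|I_{j1}| \leq 2^m |I_{j10^m}| + (2^m - 1)|G_j|,
\]
which rearranges to exactly the required bound $2^{-m} \leq (|G_j| + |I_{j10^m}|)/(|G_j| + |I_{j1}|)$. Finally, assumption (iii) of the theorem, namely the two-sided bound $q\cdot\nu(I_j) \leq h(I_j) \leq r\cdot\nu(I_j)$ for all long enough binary words $j$ with $I_j \subseteq (1+\varepsilon)\cdot J$, is exactly one of the hypotheses of the corollary and so requires no verification.

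With all three hypotheses of Theorem~\ref{thm:general-1-thm} in force, the conclusion \eqref{eq:important} follows verbatim, and the proof is complete. There is no real obstacle here: the content of the corollary is precisely the observation that in the diameter-only, decreasing-gap regime, the abstract conditions of the main theorem reduce to the familiar and easily checkable ones of concavity of the gauge function and the two-sided comparison with the Cantor measure.
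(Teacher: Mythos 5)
Your proposal is correct and follows exactly the route the paper takes: the corollary is obtained by specializing Theorem~\ref{thm:general-1-thm}, with assumption (i) reducing to concavity of $h(\delta)$ (since $h_{11}=h_{12}=0$ when $h$ depends only on the diameter) and assumption (ii) following from the decreasing-gap computation given just before the corollary's statement. Nothing further is needed.
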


The rest of this paper is structured as follows. In the next section, we give a proof of our main result. In the last section, we use this result to calculate the exact Hausdorff measure of a family of Cantor sets, for which upper and lower estimates were given in \cite{pCantorset_measure}, and for which the measure (to the author's knowledge) was previously unknown.

\section{Proof of the main results.}

To be able to give a proof of  Theorem~\ref{thm:general-1-thm} and its subsequent corollary, we will need the following lemma. This lemma and its proof  use the notation   \( \rho \cdot_L I \) to denote  the leftmost \( \rho \)-proportion of the set \( I \), and analogously by \( \rho \cdot_R I \) the rightmost \( \rho \)-proportion of the set \( I \). Note that this implies that \( 1\cdot_L I = I \), \( 1 \cdot_R I = I \), \( 0 \cdot_L I = \emptyset \) and \( 0 \cdot_R I = \emptyset \).

\begin{lemma} \label{lem:staircase}
Let \( C\sim \{ I_j \} \) be a Cantor set. Let \( \{ G_j \} \) be the corresponding gap sequence and let \( \nu \) be the associated Cantor measure. Then the following claims are equivalent:
\renewcommand{\labelenumi}{(\roman{enumi})}
\begin{enumerate}
\item For all long enough binary words \( j \) and all \( \rho \in [0,1] \)
\begin{equation}
 \nu\bigl(\rho \cdot_L (G_j \cup I_{j1})\bigr) \leq \rho \cdot \nu (I_{j1}) 
\label{eq:firstequiv}
\end{equation} 
\item For all long enough binary words \( j \) and all \( m \in \mathbb{N} \) 
\begin{equation*}
\displaystyle \frac{1}{2^m} \leq \frac{|G_j \cup I_{j10^m}|}{|G_j \cup I_{j1}|} 
\end{equation*}  
\end{enumerate}
\end{lemma}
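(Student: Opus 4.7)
The direction $(i) \Rightarrow (ii)$ is immediate: specialize (i) to $\rho = (|G_j| + |I_{j10^m}|)/(|G_j| + |I_{j1}|)$, for which the leftmost $\rho$-portion of $G_j \cup I_{j1}$ is exactly $G_j \cup I_{j10^m}$, with $\nu$-measure $\nu(I_{j1})/2^m$; rearranging yields (ii).

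For $(ii) \Rightarrow (i)$, fix a long enough $j$, set $L_j := |G_j| + |I_{j1}|$, and consider the continuous non-decreasing function $\phi(\rho) := \nu(\rho \cdot_L (G_j \cup I_{j1}))/\nu(I_{j1})$ on $[0,1]$, with $\phi(0)=0$ and $\phi(1)=1$. Statement (i) is precisely $\phi(\rho) \leq \rho$ on $[0,1]$. Because $\phi$ is constant on each $\rho$-subinterval corresponding to a gap (zero $\nu$-mass), $\phi(\rho) - \rho$ is strictly decreasing on each such subinterval; its supremum is therefore attained in the closure of the set of ``critical'' values $\rho^{*}(w) := (|G_j| + y(w))/L_j$, where $w$ ranges over binary words and $y(w)$ denotes the position of the right endpoint of $I_{j1w}$ within $I_{j1}$ measured from its left endpoint. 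By continuity, it suffices to verify (i) at each $\rho^{*}(w)$. Since $\phi(\rho^{*}(w)) = (w+1)/2^{|w|}$ (interpreting $w$ as a binary integer), this reduces to the family of inequalities
\[
y(w) + |G_j| \geq (w+1) L_j / 2^{|w|},
\]
one for each binary word $w$.

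I prove these by induction on $k := |w|$, simultaneously for all long enough words $j$. The case $k = 0$ gives equality. When $w$ ends in $1$, writing $w = u 1$ yields $y(u1) = y(u)$ and $(w+1)/2^k = (u+1)/2^{k-1}$, so the claim reduces to the case $|u| = k-1$. Hence assume $w$ ends in $0$. If $w = 0^k$, then (ii) for $j$ at $m = k$ is exactly the required bound. Otherwise $w = 0^s 1 w'$ for some $s \in \{0, \ldots, k-2\}$ and $w'$ of length $k-s-1 \geq 1$, and decomposing the position gives
\[
|G_j| + y(w) = |G_j| + |I_{j10^{s+1}}| + \bigl(|G_{j10^s}| + y_{I_{j10^s 1}}(w')\bigr),
\]
where $y_{I_{j10^s 1}}(w')$ denotes the analogous right-endpoint position of $I_{j10^s 1 w'}$ within $I_{j10^s 1}$. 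Applying the inductive hypothesis to the word $j 10^s$ (in place of $j$) at depth $k-s-1$, together with $|G_{j10^s}| + |I_{j10^s 1}| = |I_{j10^s}| - |I_{j10^{s+1}}|$, yields (writing $\delta := (w'+1)/2^{k-s-1} \in (0,1]$)
\[
|G_j| + y(w) \geq (1-\delta)\bigl(|G_j| + |I_{j10^{s+1}}|\bigr) + \delta\bigl(|G_j| + |I_{j10^s}|\bigr).
\]
Finally, (ii) for $j$ at the consecutive depths $m = s+1$ and $m = s$ bounds the two brackets from below by $L_j/2^{s+1}$ and $L_j/2^s$ respectively, giving $|G_j| + y(w) \geq L_j(1+\delta)/2^{s+1} = (w+1) L_j/2^k$ (since $w = 2^{k-s-1} + w'$ as an integer). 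This closes the induction.

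The subtle point is the decomposition when $w = 0^s 1 w'$: one must apply the inductive hypothesis at the descendant word $j 10^s$ (rather than $j1$), and then the two instances of (ii) for $j$ at consecutive depths $s$ and $s+1$ must combine convexly with coefficients $1-\delta$ and $\delta$ that exactly match the dyadic identity $(w+1)/2^k = (1+\delta)/2^{s+1}$. It is this precise alignment between the binary expansion of $w$ and the nested gap/interval structure that makes (ii) -- i.e.\ (i) at only a sparse dyadic subset of $\rho$'s -- already force (i) at every $\rho$.
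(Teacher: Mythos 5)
Your proof of (i)~$\Rightarrow$~(ii) is exactly the paper's: specialize $\rho$ so that the leftmost portion is $G_j\cup I_{j10^m}$ and divide by $\nu(I_{j1})$. For (ii)~$\Rightarrow$~(i) you take a genuinely different, and correct, route. The paper argues by contraposition: assuming (i) fails it localizes a violation at a dyadic point $\rho_{j1k10^m}$ with $|k|$ minimal, observes that minimality forces the chord of the staircase between $\rho_{j1k0}$ and $\rho_{j1k1}$ to lie below the diagonal while the staircase value at $\rho_{j1k10^m}$ lies above it, and reads off from the gap between the staircase and the chord a violation of (ii) at the \emph{descendant} word $j1k$. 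You instead prove (i) directly, by induction on the length of the binary word indexing the critical points: after the same initial reduction (the staircase is constant on the images of gaps, so $\phi(\rho)\leq\rho$ need only be checked at right endpoints of basic intervals, which are dense in $C\cap I_{j1}$), you peel off the leading block $0^s1$ of $w$, invoke the inductive hypothesis at the descendant word $j10^s$, and combine two instances of (ii) at the consecutive depths $s$ and $s+1$ with the convex weights $1-\delta$ and $\delta=(w'+1)/2^{k-s-1}$, which match the dyadic identity $(w+1)/2^k=(1+\delta)/2^{s+1}$ exactly. Both arguments exploit the same self-similar recursion, but they run it in opposite directions: the paper pushes a hypothetical failure of (i) down to a failure of (ii), while you pull (ii) up to a proof of (i). What your version buys is an explicit, fully bookkept induction (every inequality is named and the final step closes with equality), whereas the paper's minimal-counterexample argument is shorter but leaves the existence and the exhaustiveness of the minimizing pair $(k,m)$ --- in particular the case of violations at points $\rho_{j10^{t}}$, which are not of the form $\rho_{j1k10^m}$ --- somewhat implicit. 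I find no gap in your argument; the only point worth making explicit is that $\delta\leq 1$ (so the convex combination is legitimate), which follows from $w'\leq 2^{k-s-1}-1$.
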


\begin{proof}[Proof of Lemma~\ref{lem:staircase}]

  We first show that (i) implies (ii). To this end, let \( j \) be any binary word which is long enough for (i) to hold and let \( m \in \mathbb{N} \). Set \( \rho = \smash{\frac{|G_j  \cup I_{j10^m}|}{|G_j  \cup I_{j1}|} } \) and note that this implies that \( {\rho \cdot_L (G_j \cup I_{j1}) = G_j \cup I_{j10^m}}\). Also
  \begin{equation} \label{eq:tired}
    \nu \, \bigl(\rho \cdot_L (G_j \cup I_{j1})\bigr) = \nu\, (G_j \cup I_{j10^m}) = \nu\, (I_{j10^m}) = \frac{1}{2^m} \nu\, (I_{j1})
  \end{equation}
  by the definition of the Cantor measure. Using this equation and applying (i) we get 
  \begin{equation*}
    \frac{1}{2^m} \, \nu\, (I_{j1}) \overset{(\ref{eq:tired})}{=} \nu \, \bigl(\rho \cdot_L (G_j \cup I_{j1})\bigr)  \overset{(i)}{\leq} \rho \cdot \nu(I_{j1}) = \frac{|G_j  \cup I_{j10^m}|}{|G_j  \cup I_{j1}|} \cdot \nu(I_{j1}) 
  \end{equation*}
  Dividing by \( \nu(I_{j1}) \) gives (ii).

 We will now show that the reverse implication holds, i.e.\ that (ii) implies (i), by showing that if (i) is false, then (ii) is also false. This part of the proof will rely heavily on the following notation. Namely, if \(i\) is a binary word, we will write \(\rho_{j1i}\) for the unique number in \( [0,1] \) such that
\[
\rho_{j1i} \cdot_L (G_j \cup I_{j1})  = G_j \cup [w_{j1}-\frac{\delta_{j1}}{2}, w_{j1i}+\frac{\delta_{j1i}}{2}]
\]
Here, as 
\[ I_{j1} = I(w_{j1}, \delta_{j1}) = [w_{j1}-\delta_{j1}/2, w_{j1}+\delta_{j1}/2] \] 
and 
\[ I_{j1i} = I(w_{j1i}, \delta_{j1i}) = [w_{j1i}-\delta_{j1i}/2, w_{j1i}+\delta_{j1i}/2], \]
\( w_{j1}-\delta_{j1}/2 \) is the left endpoint of \( I_{j1} \) and \( {w_{j1i}+\delta_{j1i}/2} \) is the right endpoint of \( I_{j1i} \). Although we will not use it below, we have that 
\[
\rho_{j1i} = \frac{\left| G_j \cup [w_{j1}-\frac{\delta_{j1}}{2}, w_{j1i}+\frac{\delta_{j1i}}{2}] \right|}{|G_j \cup I_{j1}|}
\]
Note that with this notation, for any binary word $i$ we have $\rho_{j1i1} = \rho_{j1i}$.

Suppose now that (i) is false. Then there is a binary word \( j \) and a number \( \rho \in [0,1] \) such that
\begin{equation}
\nu \left(\rho \cdot_L (G_j \cup I_{j1}) \right) > \rho \cdot \nu(I_{j1}).
\label{eq:noti}
\end{equation}
\begin{figure}[h]
\centering
\begin{tikzpicture}

\draw[gray] (0.0000,0.0000) -- (1.6000,0.0000);
\draw[gray] (1.6000,0.0000) -- (1.6011,0.0195);
\draw[gray] (1.6011,0.0195) -- (1.6017,0.0195);
\draw[gray] (1.6017,0.0195) -- (1.6031,0.0391);
\draw[gray] (1.6031,0.0391) -- (1.6067,0.0391);
\draw[gray] (1.6067,0.0391) -- (1.6076,0.0586);
\draw[gray] (1.6076,0.0586) -- (1.6079,0.0586);
\draw[gray] (1.6079,0.0586) -- (1.6089,0.0781);
\draw[gray] (1.6089,0.0781) -- (1.6152,0.0781);
\draw[gray] (1.6152,0.0781) -- (1.6161,0.0977);
\draw[gray] (1.6161,0.0977) -- (1.6165,0.0977);
\draw[gray] (1.6165,0.0977) -- (1.6172,0.1172);
\draw[gray] (1.6172,0.1172) -- (1.6181,0.1172);
\draw[gray] (1.6181,0.1172) -- (1.6186,0.1367);
\draw[gray] (1.6186,0.1367) -- (1.6194,0.1367);
\draw[gray] (1.6194,0.1367) -- (1.6199,0.1563);
\draw[gray] (1.6199,0.1563) -- (1.6333,0.1563);
\draw[gray] (1.6333,0.1563) -- (1.6340,0.1758);
\draw[gray] (1.6340,0.1758) -- (1.6352,0.1758);
\draw[gray] (1.6352,0.1758) -- (1.6358,0.1953);
\draw[gray] (1.6358,0.1953) -- (1.6369,0.1953);
\draw[gray] (1.6369,0.1953) -- (1.6384,0.2148);
\draw[gray] (1.6384,0.2148) -- (1.6391,0.2148);
\draw[gray] (1.6391,0.2148) -- (1.6400,0.2344);
\draw[gray] (1.6400,0.2344) -- (1.6470,0.2344);
\draw[gray] (1.6470,0.2344) -- (1.6477,0.2539);
\draw[gray] (1.6477,0.2539) -- (1.6485,0.2539);
\draw[gray] (1.6485,0.2539) -- (1.6492,0.2734);
\draw[gray] (1.6492,0.2734) -- (1.6511,0.2734);
\draw[gray] (1.6511,0.2734) -- (1.6527,0.2930);
\draw[gray] (1.6527,0.2930) -- (1.6529,0.2930);
\draw[gray] (1.6529,0.2930) -- (1.6544,0.3125);
\draw[gray] (1.6544,0.3125) -- (1.7440,0.3125);
\draw[gray] (1.7440,0.3125) -- (1.7447,0.3320);
\draw[gray] (1.7447,0.3320) -- (1.7461,0.3320);
\draw[gray] (1.7461,0.3320) -- (1.7470,0.3516);
\draw[gray] (1.7470,0.3516) -- (1.7501,0.3516);
\draw[gray] (1.7501,0.3516) -- (1.7513,0.3711);
\draw[gray] (1.7513,0.3711) -- (1.7536,0.3711);
\draw[gray] (1.7536,0.3711) -- (1.7552,0.3906);
\draw[gray] (1.7552,0.3906) -- (1.7746,0.3906);
\draw[gray] (1.7746,0.3906) -- (1.7771,0.4102);
\draw[gray] (1.7771,0.4102) -- (1.7789,0.4102);
\draw[gray] (1.7789,0.4102) -- (1.7804,0.4297);
\draw[gray] (1.7804,0.4297) -- (1.7850,0.4297);
\draw[gray] (1.7850,0.4297) -- (1.7863,0.4492);
\draw[gray] (1.7863,0.4492) -- (1.7876,0.4492);
\draw[gray] (1.7876,0.4492) -- (1.7895,0.4688);
\draw[gray] (1.7895,0.4688) -- (1.8371,0.4688);
\draw[gray] (1.8371,0.4688) -- (1.8375,0.4883);
\draw[gray] (1.8375,0.4883) -- (1.8382,0.4883);
\draw[gray] (1.8382,0.4883) -- (1.8392,0.5078);
\draw[gray] (1.8392,0.5078) -- (1.8418,0.5078);
\draw[gray] (1.8418,0.5078) -- (1.8436,0.5273);
\draw[gray] (1.8436,0.5273) -- (1.8449,0.5273);
\draw[gray] (1.8449,0.5273) -- (1.8458,0.5469);
\draw[gray] (1.8458,0.5469) -- (1.8523,0.5469);
\draw[gray] (1.8523,0.5469) -- (1.8534,0.5664);
\draw[gray] (1.8534,0.5664) -- (1.8544,0.5664);
\draw[gray] (1.8544,0.5664) -- (1.8559,0.5859);
\draw[gray] (1.8559,0.5859) -- (1.8587,0.5859);
\draw[gray] (1.8587,0.5859) -- (1.8599,0.6055);
\draw[gray] (1.8599,0.6055) -- (1.8624,0.6055);
\draw[gray] (1.8624,0.6055) -- (1.8637,0.6250);
\draw[gray] (1.8637,0.6250) -- (2.4319,0.6250);
\draw[gray] (2.4319,0.6250) -- (2.4336,0.6445);
\draw[gray] (2.4336,0.6445) -- (2.4347,0.6445);
\draw[gray] (2.4347,0.6445) -- (2.4373,0.6641);
\draw[gray] (2.4373,0.6641) -- (2.4396,0.6641);
\draw[gray] (2.4396,0.6641) -- (2.4420,0.6836);
\draw[gray] (2.4420,0.6836) -- (2.4435,0.6836);
\draw[gray] (2.4435,0.6836) -- (2.4449,0.7031);
\draw[gray] (2.4449,0.7031) -- (2.4535,0.7031);
\draw[gray] (2.4535,0.7031) -- (2.4543,0.7227);
\draw[gray] (2.4543,0.7227) -- (2.4546,0.7227);
\draw[gray] (2.4546,0.7227) -- (2.4554,0.7422);
\draw[gray] (2.4554,0.7422) -- (2.4585,0.7422);
\draw[gray] (2.4585,0.7422) -- (2.4596,0.7617);
\draw[gray] (2.4596,0.7617) -- (2.4604,0.7617);
\draw[gray] (2.4604,0.7617) -- (2.4612,0.7813);
\draw[gray] (2.4612,0.7813) -- (2.4957,0.7813);
\draw[gray] (2.4957,0.7813) -- (2.4975,0.8008);
\draw[gray] (2.4975,0.8008) -- (2.4983,0.8008);
\draw[gray] (2.4983,0.8008) -- (2.5000,0.8203);
\draw[gray] (2.5000,0.8203) -- (2.5021,0.8203);
\draw[gray] (2.5021,0.8203) -- (2.5031,0.8398);
\draw[gray] (2.5031,0.8398) -- (2.5053,0.8398);
\draw[gray] (2.5053,0.8398) -- (2.5063,0.8594);
\draw[gray] (2.5063,0.8594) -- (2.5231,0.8594);
\draw[gray] (2.5231,0.8594) -- (2.5244,0.8789);
\draw[gray] (2.5244,0.8789) -- (2.5272,0.8789);
\draw[gray] (2.5272,0.8789) -- (2.5284,0.8984);
\draw[gray] (2.5284,0.8984) -- (2.5325,0.8984);
\draw[gray] (2.5325,0.8984) -- (2.5345,0.9180);
\draw[gray] (2.5345,0.9180) -- (2.5375,0.9180);
\draw[gray] (2.5375,0.9180) -- (2.5393,0.9375);
\draw[gray] (2.5393,0.9375) -- (2.6649,0.9375);
\draw[gray] (2.6649,0.9375) -- (2.6659,0.9570);
\draw[gray] (2.6659,0.9570) -- (2.6663,0.9570);
\draw[gray] (2.6663,0.9570) -- (2.6672,0.9766);
\draw[gray] (2.6672,0.9766) -- (2.6704,0.9766);
\draw[gray] (2.6704,0.9766) -- (2.6709,0.9961);
\draw[gray] (2.6709,0.9961) -- (2.6716,0.9961);
\draw[gray] (2.6716,0.9961) -- (2.6721,1.0156);
\draw[gray] (2.6721,1.0156) -- (2.6879,1.0156);
\draw[gray] (2.6879,1.0156) -- (2.6887,1.0352);
\draw[gray] (2.6887,1.0352) -- (2.6890,1.0352);
\draw[gray] (2.6890,1.0352) -- (2.6899,1.0547);
\draw[gray] (2.6899,1.0547) -- (2.6922,1.0547);
\draw[gray] (2.6922,1.0547) -- (2.6931,1.0742);
\draw[gray] (2.6931,1.0742) -- (2.6944,1.0742);
\draw[gray] (2.6944,1.0742) -- (2.6954,1.0938);
\draw[gray] (2.6954,1.0938) -- (2.7348,1.0938);
\draw[gray] (2.7348,1.0938) -- (2.7359,1.1133);
\draw[gray] (2.7359,1.1133) -- (2.7379,1.1133);
\draw[gray] (2.7379,1.1133) -- (2.7388,1.1328);
\draw[gray] (2.7388,1.1328) -- (2.7425,1.1328);
\draw[gray] (2.7425,1.1328) -- (2.7436,1.1523);
\draw[gray] (2.7436,1.1523) -- (2.7441,1.1523);
\draw[gray] (2.7441,1.1523) -- (2.7449,1.1719);
\draw[gray] (2.7449,1.1719) -- (2.7486,1.1719);
\draw[gray] (2.7486,1.1719) -- (2.7494,1.1914);
\draw[gray] (2.7494,1.1914) -- (2.7499,1.1914);
\draw[gray] (2.7499,1.1914) -- (2.7505,1.2109);
\draw[gray] (2.7505,1.2109) -- (2.7552,1.2109);
\draw[gray] (2.7552,1.2109) -- (2.7556,1.2305);
\draw[gray] (2.7556,1.2305) -- (2.7566,1.2305);
\draw[gray] (2.7566,1.2305) -- (2.7571,1.2500);
\draw[gray] (2.7571,1.2500) -- (3.4846,1.2500);
\draw[gray] (3.4846,1.2500) -- (3.4851,1.2695);
\draw[gray] (3.4851,1.2695) -- (3.4859,1.2695);
\draw[gray] (3.4859,1.2695) -- (3.4866,1.2891);
\draw[gray] (3.4866,1.2891) -- (3.4875,1.2891);
\draw[gray] (3.4875,1.2891) -- (3.4883,1.3086);
\draw[gray] (3.4883,1.3086) -- (3.4888,1.3086);
\draw[gray] (3.4888,1.3086) -- (3.4894,1.3281);
\draw[gray] (3.4894,1.3281) -- (3.4957,1.3281);
\draw[gray] (3.4957,1.3281) -- (3.4963,1.3477);
\draw[gray] (3.4963,1.3477) -- (3.4967,1.3477);
\draw[gray] (3.4967,1.3477) -- (3.4971,1.3672);
\draw[gray] (3.4971,1.3672) -- (3.4985,1.3672);
\draw[gray] (3.4985,1.3672) -- (3.4989,1.3867);
\draw[gray] (3.4989,1.3867) -- (3.4992,1.3867);
\draw[gray] (3.4992,1.3867) -- (3.4998,1.4063);
\draw[gray] (3.4998,1.4063) -- (3.5138,1.4063);
\draw[gray] (3.5138,1.4063) -- (3.5145,1.4258);
\draw[gray] (3.5145,1.4258) -- (3.5152,1.4258);
\draw[gray] (3.5152,1.4258) -- (3.5155,1.4453);
\draw[gray] (3.5155,1.4453) -- (3.5169,1.4453);
\draw[gray] (3.5169,1.4453) -- (3.5174,1.4648);
\draw[gray] (3.5174,1.4648) -- (3.5180,1.4648);
\draw[gray] (3.5180,1.4648) -- (3.5184,1.4844);
\draw[gray] (3.5184,1.4844) -- (3.5246,1.4844);
\draw[gray] (3.5246,1.4844) -- (3.5256,1.5039);
\draw[gray] (3.5256,1.5039) -- (3.5260,1.5039);
\draw[gray] (3.5260,1.5039) -- (3.5269,1.5234);
\draw[gray] (3.5269,1.5234) -- (3.5291,1.5234);
\draw[gray] (3.5291,1.5234) -- (3.5295,1.5430);
\draw[gray] (3.5295,1.5430) -- (3.5302,1.5430);
\draw[gray] (3.5302,1.5430) -- (3.5308,1.5625);
\draw[gray] (3.5308,1.5625) -- (3.6267,1.5625);
\draw[gray] (3.6267,1.5625) -- (3.6271,1.5820);
\draw[gray] (3.6271,1.5820) -- (3.6274,1.5820);
\draw[gray] (3.6274,1.5820) -- (3.6280,1.6016);
\draw[gray] (3.6280,1.6016) -- (3.6304,1.6016);
\draw[gray] (3.6304,1.6016) -- (3.6308,1.6211);
\draw[gray] (3.6308,1.6211) -- (3.6315,1.6211);
\draw[gray] (3.6315,1.6211) -- (3.6321,1.6406);
\draw[gray] (3.6321,1.6406) -- (3.6409,1.6406);
\draw[gray] (3.6409,1.6406) -- (3.6415,1.6602);
\draw[gray] (3.6415,1.6602) -- (3.6419,1.6602);
\draw[gray] (3.6419,1.6602) -- (3.6422,1.6797);
\draw[gray] (3.6422,1.6797) -- (3.6433,1.6797);
\draw[gray] (3.6433,1.6797) -- (3.6437,1.6992);
\draw[gray] (3.6437,1.6992) -- (3.6443,1.6992);
\draw[gray] (3.6443,1.6992) -- (3.6446,1.7188);
\draw[gray] (3.6446,1.7188) -- (3.6532,1.7188);
\draw[gray] (3.6532,1.7188) -- (3.6535,1.7383);
\draw[gray] (3.6535,1.7383) -- (3.6539,1.7383);
\draw[gray] (3.6539,1.7383) -- (3.6544,1.7578);
\draw[gray] (3.6544,1.7578) -- (3.6558,1.7578);
\draw[gray] (3.6558,1.7578) -- (3.6560,1.7773);
\draw[gray] (3.6560,1.7773) -- (3.6565,1.7773);
\draw[gray] (3.6565,1.7773) -- (3.6568,1.7969);
\draw[gray] (3.6568,1.7969) -- (3.6608,1.7969);
\draw[gray] (3.6608,1.7969) -- (3.6612,1.8164);
\draw[gray] (3.6612,1.8164) -- (3.6613,1.8164);
\draw[gray] (3.6613,1.8164) -- (3.6617,1.8359);
\draw[gray] (3.6617,1.8359) -- (3.6636,1.8359);
\draw[gray] (3.6636,1.8359) -- (3.6643,1.8555);
\draw[gray] (3.6643,1.8555) -- (3.6644,1.8555);
\draw[gray] (3.6644,1.8555) -- (3.6650,1.8750);
\draw[gray] (3.6650,1.8750) -- (3.9663,1.8750);
\draw[gray] (3.9663,1.8750) -- (3.9668,1.8945);
\draw[gray] (3.9668,1.8945) -- (3.9678,1.8945);
\draw[gray] (3.9678,1.8945) -- (3.9682,1.9141);
\draw[gray] (3.9682,1.9141) -- (3.9706,1.9141);
\draw[gray] (3.9706,1.9141) -- (3.9711,1.9336);
\draw[gray] (3.9711,1.9336) -- (3.9714,1.9336);
\draw[gray] (3.9714,1.9336) -- (3.9721,1.9531);
\draw[gray] (3.9721,1.9531) -- (3.9846,1.9531);
\draw[gray] (3.9846,1.9531) -- (3.9854,1.9727);
\draw[gray] (3.9854,1.9727) -- (3.9855,1.9727);
\draw[gray] (3.9855,1.9727) -- (3.9863,1.9922);
\draw[gray] (3.9863,1.9922) -- (3.9885,1.9922);
\draw[gray] (3.9885,1.9922) -- (3.9890,2.0117);
\draw[gray] (3.9890,2.0117) -- (3.9900,2.0117);
\draw[gray] (3.9900,2.0117) -- (3.9906,2.0313);
\draw[gray] (3.9906,2.0313) -- (4.0057,2.0313);
\draw[gray] (4.0057,2.0313) -- (4.0062,2.0508);
\draw[gray] (4.0062,2.0508) -- (4.0064,2.0508);
\draw[gray] (4.0064,2.0508) -- (4.0070,2.0703);
\draw[gray] (4.0070,2.0703) -- (4.0091,2.0703);
\draw[gray] (4.0091,2.0703) -- (4.0099,2.0898);
\draw[gray] (4.0099,2.0898) -- (4.0102,2.0898);
\draw[gray] (4.0102,2.0898) -- (4.0111,2.1094);
\draw[gray] (4.0111,2.1094) -- (4.0146,2.1094);
\draw[gray] (4.0146,2.1094) -- (4.0149,2.1289);
\draw[gray] (4.0149,2.1289) -- (4.0152,2.1289);
\draw[gray] (4.0152,2.1289) -- (4.0154,2.1484);
\draw[gray] (4.0154,2.1484) -- (4.0160,2.1484);
\draw[gray] (4.0160,2.1484) -- (4.0164,2.1680);
\draw[gray] (4.0164,2.1680) -- (4.0167,2.1680);
\draw[gray] (4.0167,2.1680) -- (4.0171,2.1875);
\draw[gray] (4.0171,2.1875) -- (4.1113,2.1875);
\draw[gray] (4.1113,2.1875) -- (4.1117,2.2070);
\draw[gray] (4.1117,2.2070) -- (4.1119,2.2070);
\draw[gray] (4.1119,2.2070) -- (4.1123,2.2266);
\draw[gray] (4.1123,2.2266) -- (4.1132,2.2266);
\draw[gray] (4.1132,2.2266) -- (4.1136,2.2461);
\draw[gray] (4.1136,2.2461) -- (4.1139,2.2461);
\draw[gray] (4.1139,2.2461) -- (4.1145,2.2656);
\draw[gray] (4.1145,2.2656) -- (4.1172,2.2656);
\draw[gray] (4.1172,2.2656) -- (4.1178,2.2852);
\draw[gray] (4.1178,2.2852) -- (4.1182,2.2852);
\draw[gray] (4.1182,2.2852) -- (4.1187,2.3047);
\draw[gray] (4.1187,2.3047) -- (4.1207,2.3047);
\draw[gray] (4.1207,2.3047) -- (4.1213,2.3242);
\draw[gray] (4.1213,2.3242) -- (4.1217,2.3242);
\draw[gray] (4.1217,2.3242) -- (4.1221,2.3438);
\draw[gray] (4.1221,2.3438) -- (4.1487,2.3438);
\draw[gray] (4.1487,2.3438) -- (4.1490,2.3633);
\draw[gray] (4.1490,2.3633) -- (4.1497,2.3633);
\draw[gray] (4.1497,2.3633) -- (4.1500,2.3828);
\draw[gray] (4.1500,2.3828) -- (4.1528,2.3828);
\draw[gray] (4.1528,2.3828) -- (4.1533,2.4023);
\draw[gray] (4.1533,2.4023) -- (4.1537,2.4023);
\draw[gray] (4.1537,2.4023) -- (4.1540,2.4219);
\draw[gray] (4.1540,2.4219) -- (4.1555,2.4219);
\draw[gray] (4.1555,2.4219) -- (4.1557,2.4414);
\draw[gray] (4.1557,2.4414) -- (4.1561,2.4414);
\draw[gray] (4.1561,2.4414) -- (4.1566,2.4609);
\draw[gray] (4.1566,2.4609) -- (4.1582,2.4609);
\draw[gray] (4.1582,2.4609) -- (4.1586,2.4805);
\draw[gray] (4.1586,2.4805) -- (4.1596,2.4805);
\draw[gray] (4.1596,2.4805) -- (4.1600,2.5000);
\draw[gray] (4.1600,2.5000) -- (5.1980,2.5000);
\draw[gray] (5.1980,2.5000) -- (5.2000,2.5195);
\draw[gray] (5.2000,2.5195) -- (5.2013,2.5195);
\draw[gray] (5.2013,2.5195) -- (5.2023,2.5391);
\draw[gray] (5.2023,2.5391) -- (5.2042,2.5391);
\draw[gray] (5.2042,2.5391) -- (5.2050,2.5586);
\draw[gray] (5.2050,2.5586) -- (5.2059,2.5586);
\draw[gray] (5.2059,2.5586) -- (5.2075,2.5781);
\draw[gray] (5.2075,2.5781) -- (5.2199,2.5781);
\draw[gray] (5.2199,2.5781) -- (5.2206,2.5977);
\draw[gray] (5.2206,2.5977) -- (5.2209,2.5977);
\draw[gray] (5.2209,2.5977) -- (5.2218,2.6172);
\draw[gray] (5.2218,2.6172) -- (5.2246,2.6172);
\draw[gray] (5.2246,2.6172) -- (5.2254,2.6367);
\draw[gray] (5.2254,2.6367) -- (5.2260,2.6367);
\draw[gray] (5.2260,2.6367) -- (5.2273,2.6563);
\draw[gray] (5.2273,2.6563) -- (5.2466,2.6563);
\draw[gray] (5.2466,2.6563) -- (5.2475,2.6758);
\draw[gray] (5.2475,2.6758) -- (5.2489,2.6758);
\draw[gray] (5.2489,2.6758) -- (5.2507,2.6953);
\draw[gray] (5.2507,2.6953) -- (5.2575,2.6953);
\draw[gray] (5.2575,2.6953) -- (5.2595,2.7148);
\draw[gray] (5.2595,2.7148) -- (5.2600,2.7148);
\draw[gray] (5.2600,2.7148) -- (5.2621,2.7344);
\draw[gray] (5.2621,2.7344) -- (5.2758,2.7344);
\draw[gray] (5.2758,2.7344) -- (5.2771,2.7539);
\draw[gray] (5.2771,2.7539) -- (5.2784,2.7539);
\draw[gray] (5.2784,2.7539) -- (5.2798,2.7734);
\draw[gray] (5.2798,2.7734) -- (5.2818,2.7734);
\draw[gray] (5.2818,2.7734) -- (5.2824,2.7930);
\draw[gray] (5.2824,2.7930) -- (5.2836,2.7930);
\draw[gray] (5.2836,2.7930) -- (5.2842,2.8125);
\draw[gray] (5.2842,2.8125) -- (5.3741,2.8125);
\draw[gray] (5.3741,2.8125) -- (5.3746,2.8320);
\draw[gray] (5.3746,2.8320) -- (5.3754,2.8320);
\draw[gray] (5.3754,2.8320) -- (5.3761,2.8516);
\draw[gray] (5.3761,2.8516) -- (5.3775,2.8516);
\draw[gray] (5.3775,2.8516) -- (5.3782,2.8711);
\draw[gray] (5.3782,2.8711) -- (5.3791,2.8711);
\draw[gray] (5.3791,2.8711) -- (5.3804,2.8906);
\draw[gray] (5.3804,2.8906) -- (5.3857,2.8906);
\draw[gray] (5.3857,2.8906) -- (5.3874,2.9102);
\draw[gray] (5.3874,2.9102) -- (5.3882,2.9102);
\draw[gray] (5.3882,2.9102) -- (5.3892,2.9297);
\draw[gray] (5.3892,2.9297) -- (5.3916,2.9297);
\draw[gray] (5.3916,2.9297) -- (5.3929,2.9492);
\draw[gray] (5.3929,2.9492) -- (5.3934,2.9492);
\draw[gray] (5.3934,2.9492) -- (5.3951,2.9688);
\draw[gray] (5.3951,2.9688) -- (5.4140,2.9688);
\draw[gray] (5.4140,2.9688) -- (5.4144,2.9883);
\draw[gray] (5.4144,2.9883) -- (5.4146,2.9883);
\draw[gray] (5.4146,2.9883) -- (5.4148,3.0078);
\draw[gray] (5.4148,3.0078) -- (5.4159,3.0078);
\draw[gray] (5.4159,3.0078) -- (5.4164,3.0273);
\draw[gray] (5.4164,3.0273) -- (5.4169,3.0273);
\draw[gray] (5.4169,3.0273) -- (5.4176,3.0469);
\draw[gray] (5.4176,3.0469) -- (5.4225,3.0469);
\draw[gray] (5.4225,3.0469) -- (5.4234,3.0664);
\draw[gray] (5.4234,3.0664) -- (5.4242,3.0664);
\draw[gray] (5.4242,3.0664) -- (5.4247,3.0859);
\draw[gray] (5.4247,3.0859) -- (5.4256,3.0859);
\draw[gray] (5.4256,3.0859) -- (5.4262,3.1055);
\draw[gray] (5.4262,3.1055) -- (5.4271,3.1055);
\draw[gray] (5.4271,3.1055) -- (5.4280,3.1250);
\draw[gray] (5.4280,3.1250) -- (5.6328,3.1250);
\draw[gray] (5.6328,3.1250) -- (5.6335,3.1445);
\draw[gray] (5.6335,3.1445) -- (5.6343,3.1445);
\draw[gray] (5.6343,3.1445) -- (5.6355,3.1641);
\draw[gray] (5.6355,3.1641) -- (5.6371,3.1641);
\draw[gray] (5.6371,3.1641) -- (5.6389,3.1836);
\draw[gray] (5.6389,3.1836) -- (5.6397,3.1836);
\draw[gray] (5.6397,3.1836) -- (5.6410,3.2031);
\draw[gray] (5.6410,3.2031) -- (5.6478,3.2031);
\draw[gray] (5.6478,3.2031) -- (5.6490,3.2227);
\draw[gray] (5.6490,3.2227) -- (5.6505,3.2227);
\draw[gray] (5.6505,3.2227) -- (5.6518,3.2422);
\draw[gray] (5.6518,3.2422) -- (5.6531,3.2422);
\draw[gray] (5.6531,3.2422) -- (5.6547,3.2617);
\draw[gray] (5.6547,3.2617) -- (5.6550,3.2617);
\draw[gray] (5.6550,3.2617) -- (5.6565,3.2813);
\draw[gray] (5.6565,3.2813) -- (5.6774,3.2813);
\draw[gray] (5.6774,3.2813) -- (5.6782,3.3008);
\draw[gray] (5.6782,3.3008) -- (5.6796,3.3008);
\draw[gray] (5.6796,3.3008) -- (5.6803,3.3203);
\draw[gray] (5.6803,3.3203) -- (5.6810,3.3203);
\draw[gray] (5.6810,3.3203) -- (5.6823,3.3398);
\draw[gray] (5.6823,3.3398) -- (5.6827,3.3398);
\draw[gray] (5.6827,3.3398) -- (5.6843,3.3594);
\draw[gray] (5.6843,3.3594) -- (5.6887,3.3594);
\draw[gray] (5.6887,3.3594) -- (5.6889,3.3789);
\draw[gray] (5.6889,3.3789) -- (5.6892,3.3789);
\draw[gray] (5.6892,3.3789) -- (5.6897,3.3984);
\draw[gray] (5.6897,3.3984) -- (5.6909,3.3984);
\draw[gray] (5.6909,3.3984) -- (5.6913,3.4180);
\draw[gray] (5.6913,3.4180) -- (5.6913,3.4180);
\draw[gray] (5.6913,3.4180) -- (5.6917,3.4375);
\draw[gray] (5.6917,3.4375) -- (5.7053,3.4375);
\draw[gray] (5.7053,3.4375) -- (5.7058,3.4570);
\draw[gray] (5.7058,3.4570) -- (5.7061,3.4570);
\draw[gray] (5.7061,3.4570) -- (5.7066,3.4766);
\draw[gray] (5.7066,3.4766) -- (5.7103,3.4766);
\draw[gray] (5.7103,3.4766) -- (5.7108,3.4961);
\draw[gray] (5.7108,3.4961) -- (5.7112,3.4961);
\draw[gray] (5.7112,3.4961) -- (5.7116,3.5156);
\draw[gray] (5.7116,3.5156) -- (5.7175,3.5156);
\draw[gray] (5.7175,3.5156) -- (5.7178,3.5352);
\draw[gray] (5.7178,3.5352) -- (5.7180,3.5352);
\draw[gray] (5.7180,3.5352) -- (5.7184,3.5547);
\draw[gray] (5.7184,3.5547) -- (5.7195,3.5547);
\draw[gray] (5.7195,3.5547) -- (5.7201,3.5742);
\draw[gray] (5.7201,3.5742) -- (5.7204,3.5742);
\draw[gray] (5.7204,3.5742) -- (5.7210,3.5938);
\draw[gray] (5.7210,3.5938) -- (5.7468,3.5938);
\draw[gray] (5.7468,3.5938) -- (5.7478,3.6133);
\draw[gray] (5.7478,3.6133) -- (5.7485,3.6133);
\draw[gray] (5.7485,3.6133) -- (5.7491,3.6328);
\draw[gray] (5.7491,3.6328) -- (5.7530,3.6328);
\draw[gray] (5.7530,3.6328) -- (5.7548,3.6523);
\draw[gray] (5.7548,3.6523) -- (5.7561,3.6523);
\draw[gray] (5.7561,3.6523) -- (5.7579,3.6719);
\draw[gray] (5.7579,3.6719) -- (5.7666,3.6719);
\draw[gray] (5.7666,3.6719) -- (5.7670,3.6914);
\draw[gray] (5.7670,3.6914) -- (5.7674,3.6914);
\draw[gray] (5.7674,3.6914) -- (5.7679,3.7109);
\draw[gray] (5.7679,3.7109) -- (5.7706,3.7109);
\draw[gray] (5.7706,3.7109) -- (5.7709,3.7305);
\draw[gray] (5.7709,3.7305) -- (5.7713,3.7305);
\draw[gray] (5.7713,3.7305) -- (5.7719,3.7500);
\draw[line width=1pt, black] (5.7719,3.7500) -- (6.8772,3.7500);
\draw[gray] (6.8772,3.7500) -- (6.8781,3.7695);
\draw[gray] (6.8781,3.7695) -- (6.8793,3.7695);
\draw[gray] (6.8793,3.7695) -- (6.8812,3.7891);
\draw[gray] (6.8812,3.7891) -- (6.8819,3.7891);
\draw[gray] (6.8819,3.7891) -- (6.8827,3.8086);
\draw[gray] (6.8827,3.8086) -- (6.8840,3.8086);
\draw[gray] (6.8840,3.8086) -- (6.8856,3.8281);
\draw[gray] (6.8856,3.8281) -- (6.8931,3.8281);
\draw[gray] (6.8931,3.8281) -- (6.8946,3.8477);
\draw[gray] (6.8946,3.8477) -- (6.8955,3.8477);
\draw[gray] (6.8955,3.8477) -- (6.8969,3.8672);
\draw[gray] (6.8969,3.8672) -- (6.9002,3.8672);
\draw[gray] (6.9002,3.8672) -- (6.9019,3.8867);
\draw[gray] (6.9019,3.8867) -- (6.9024,3.8867);
\draw[gray] (6.9024,3.8867) -- (6.9040,3.9063);
\draw[gray] (6.9040,3.9063) -- (6.9635,3.9063);
\draw[gray] (6.9635,3.9063) -- (6.9642,3.9258);
\draw[gray] (6.9642,3.9258) -- (6.9645,3.9258);
\draw[gray] (6.9645,3.9258) -- (6.9653,3.9453);
\draw[gray] (6.9653,3.9453) -- (6.9672,3.9453);
\draw[gray] (6.9672,3.9453) -- (6.9678,3.9648);
\draw[gray] (6.9678,3.9648) -- (6.9685,3.9648);
\draw[gray] (6.9685,3.9648) -- (6.9695,3.9844);
\draw[gray] (6.9695,3.9844) -- (6.9799,3.9844);
\draw[gray] (6.9799,3.9844) -- (6.9809,4.0039);
\draw[gray] (6.9809,4.0039) -- (6.9817,4.0039);
\draw[gray] (6.9817,4.0039) -- (6.9826,4.0234);
\draw[gray] (6.9826,4.0234) -- (6.9841,4.0234);
\draw[gray] (6.9841,4.0234) -- (6.9854,4.0430);
\draw[gray] (6.9854,4.0430) -- (6.9860,4.0430);
\draw[gray] (6.9860,4.0430) -- (6.9875,4.0625);
\draw[gray] (6.9875,4.0625) -- (7.0923,4.0625);
\draw[gray] (7.0923,4.0625) -- (7.0947,4.0820);
\draw[gray] (7.0947,4.0820) -- (7.0960,4.0820);
\draw[gray] (7.0960,4.0820) -- (7.0974,4.1016);
\draw[gray] (7.0974,4.1016) -- (7.1036,4.1016);
\draw[gray] (7.1036,4.1016) -- (7.1050,4.1211);
\draw[gray] (7.1050,4.1211) -- (7.1060,4.1211);
\draw[gray] (7.1060,4.1211) -- (7.1072,4.1406);
\draw[gray] (7.1072,4.1406) -- (7.1216,4.1406);
\draw[gray] (7.1216,4.1406) -- (7.1233,4.1602);
\draw[gray] (7.1233,4.1602) -- (7.1254,4.1602);
\draw[gray] (7.1254,4.1602) -- (7.1270,4.1797);
\draw[gray] (7.1270,4.1797) -- (7.1314,4.1797);
\draw[gray] (7.1314,4.1797) -- (7.1326,4.1992);
\draw[gray] (7.1326,4.1992) -- (7.1336,4.1992);
\draw[gray] (7.1336,4.1992) -- (7.1344,4.2188);
\draw[gray] (7.1344,4.2188) -- (7.1925,4.2188);
\draw[gray] (7.1925,4.2188) -- (7.1936,4.2383);
\draw[gray] (7.1936,4.2383) -- (7.1949,4.2383);
\draw[gray] (7.1949,4.2383) -- (7.1965,4.2578);
\draw[gray] (7.1965,4.2578) -- (7.2010,4.2578);
\draw[gray] (7.2010,4.2578) -- (7.2033,4.2773);
\draw[gray] (7.2033,4.2773) -- (7.2052,4.2773);
\draw[gray] (7.2052,4.2773) -- (7.2084,4.2969);
\draw[gray] (7.2084,4.2969) -- (7.2215,4.2969);
\draw[gray] (7.2215,4.2969) -- (7.2222,4.3164);
\draw[gray] (7.2222,4.3164) -- (7.2229,4.3164);
\draw[gray] (7.2229,4.3164) -- (7.2240,4.3359);
\draw[gray] (7.2240,4.3359) -- (7.2292,4.3359);
\draw[gray] (7.2292,4.3359) -- (7.2301,4.3555);
\draw[gray] (7.2301,4.3555) -- (7.2306,4.3555);
\draw[gray] (7.2306,4.3555) -- (7.2315,4.3750);
\draw[gray] (7.2315,4.3750) -- (7.5763,4.3750);
\draw[gray] (7.5763,4.3750) -- (7.5774,4.3945);
\draw[gray] (7.5774,4.3945) -- (7.5791,4.3945);
\draw[gray] (7.5791,4.3945) -- (7.5804,4.4141);
\draw[gray] (7.5804,4.4141) -- (7.5853,4.4141);
\draw[gray] (7.5853,4.4141) -- (7.5866,4.4336);
\draw[gray] (7.5866,4.4336) -- (7.5871,4.4336);
\draw[gray] (7.5871,4.4336) -- (7.5883,4.4531);
\draw[gray] (7.5883,4.4531) -- (7.6058,4.4531);
\draw[gray] (7.6058,4.4531) -- (7.6086,4.4727);
\draw[gray] (7.6086,4.4727) -- (7.6117,4.4727);
\draw[gray] (7.6117,4.4727) -- (7.6136,4.4922);
\draw[gray] (7.6136,4.4922) -- (7.6183,4.4922);
\draw[gray] (7.6183,4.4922) -- (7.6201,4.5117);
\draw[gray] (7.6201,4.5117) -- (7.6207,4.5117);
\draw[gray] (7.6207,4.5117) -- (7.6225,4.5313);
\draw[gray] (7.6225,4.5313) -- (7.6786,4.5313);
\draw[gray] (7.6786,4.5313) -- (7.6800,4.5508);
\draw[gray] (7.6800,4.5508) -- (7.6817,4.5508);
\draw[gray] (7.6817,4.5508) -- (7.6827,4.5703);
\draw[gray] (7.6827,4.5703) -- (7.6855,4.5703);
\draw[gray] (7.6855,4.5703) -- (7.6883,4.5898);
\draw[gray] (7.6883,4.5898) -- (7.6893,4.5898);
\draw[gray] (7.6893,4.5898) -- (7.6913,4.6094);
\draw[gray] (7.6913,4.6094) -- (7.6973,4.6094);
\draw[gray] (7.6973,4.6094) -- (7.6983,4.6289);
\draw[gray] (7.6983,4.6289) -- (7.6998,4.6289);
\draw[gray] (7.6998,4.6289) -- (7.7014,4.6484);
\draw[gray] (7.7014,4.6484) -- (7.7043,4.6484);
\draw[gray] (7.7043,4.6484) -- (7.7052,4.6680);
\draw[gray] (7.7052,4.6680) -- (7.7058,4.6680);
\draw[gray] (7.7058,4.6680) -- (7.7064,4.6875);
\draw[gray] (7.7064,4.6875) -- (7.8494,4.6875);
\draw[gray] (7.8494,4.6875) -- (7.8506,4.7070);
\draw[gray] (7.8506,4.7070) -- (7.8519,4.7070);
\draw[gray] (7.8519,4.7070) -- (7.8540,4.7266);
\draw[gray] (7.8540,4.7266) -- (7.8609,4.7266);
\draw[gray] (7.8609,4.7266) -- (7.8635,4.7461);
\draw[gray] (7.8635,4.7461) -- (7.8657,4.7461);
\draw[gray] (7.8657,4.7461) -- (7.8696,4.7656);
\draw[gray] (7.8696,4.7656) -- (7.8855,4.7656);
\draw[gray] (7.8855,4.7656) -- (7.8865,4.7852);
\draw[gray] (7.8865,4.7852) -- (7.8880,4.7852);
\draw[gray] (7.8880,4.7852) -- (7.8894,4.8047);
\draw[gray] (7.8894,4.8047) -- (7.8983,4.8047);
\draw[gray] (7.8983,4.8047) -- (7.8992,4.8242);
\draw[gray] (7.8992,4.8242) -- (7.9015,4.8242);
\draw[gray] (7.9015,4.8242) -- (7.9025,4.8438);
\draw[gray] (7.9025,4.8438) -- (7.9389,4.8438);
\draw[gray] (7.9389,4.8438) -- (7.9415,4.8633);
\draw[gray] (7.9415,4.8633) -- (7.9424,4.8633);
\draw[gray] (7.9424,4.8633) -- (7.9454,4.8828);
\draw[gray] (7.9454,4.8828) -- (7.9470,4.8828);
\draw[gray] (7.9470,4.8828) -- (7.9502,4.9023);
\draw[gray] (7.9502,4.9023) -- (7.9510,4.9023);
\draw[gray] (7.9510,4.9023) -- (7.9537,4.9219);
\draw[gray] (7.9537,4.9219) -- (7.9805,4.9219);
\draw[gray] (7.9805,4.9219) -- (7.9831,4.9414);
\draw[gray] (7.9831,4.9414) -- (7.9852,4.9414);
\draw[gray] (7.9852,4.9414) -- (7.9870,4.9609);
\draw[gray] (7.9870,4.9609) -- (7.9934,4.9609);
\draw[gray] (7.9934,4.9609) -- (7.9953,4.9805);
\draw[gray] (7.9953,4.9805) -- (7.9981,4.9805);
\draw[gray] (7.9981,4.9805) -- (8.0000,5.0000);
\draw (0.0000,0.0000) -- (8.0000,5.0000);
\node at (2.5000,2.2000) {$\rho \cdot \nu (I_{j1})$};
\node at (6.5000,2.1875) {$\nu (\rho \cdot_L (G_j \cup I_{j1}))$};
\draw[->] (-0.5000,0.0000) -- (8.7000,0.0000);
\draw[->] (0.0000,-0.5000) -- (0.0000,5.7000);
\node at (9.0000,0.0000) {$\rho$};
\node at (8.0000,-0.3000) {\small{$1$}};
\draw[-] (8.0000,-0.1000) -- (8.0000,0.1000);
\node at (-0.6000,5.0000) {\footnotesize{$\nu (I_{j1})$}};
\draw[-] (-0.1000,5.0000) -- (0.1000,5.0000);
\node at (-0.6000,2.5000) {$\frac{\nu (I_{j1})}{2}$};
\draw[-] (-0.1000,2.5000) -- (0.1000,2.5000);
\end{tikzpicture}

\caption{The black diagonal line shows the \textsc{rhs} of equation~\ref{eq:firstequiv} and the grey line the \textsc{lhs} of the same equation. Note in particular that the \textsc{lhs} is constant for $\rho$ corresponding to gaps in the Cantor set.}
\label{fig:stair}
\end{figure}
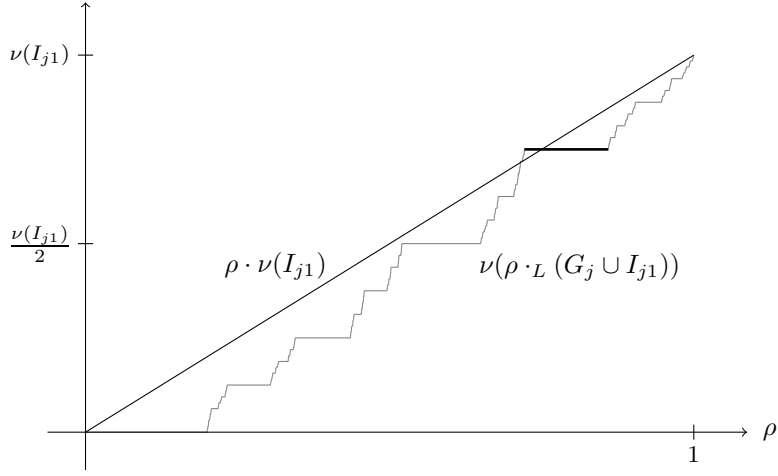
As the \textsc{lhs} of equation~\ref{eq:noti} is constant at $\rho$ corresponding to the gaps associated to \( C \) (see Figure~\ref{fig:stair}), we now conclude that there must exist at least one pair \( (k, m ) \), where \( k \) is a binary word and \( m \) is a positive integer, such that  \( {\rho =  \rho_{j1k10^m}  }\) satisfies  the inequality in equation~\ref{eq:noti} and, in addition, such that if \( (k', m' ) \) is any other pair for which \( \rho = \rho_{j1k'10^{m'}} \) satisfies the inequality in equation~\ref{eq:noti}, then \( |k| \leq |k'| \).

  As \( \rho = \rho_{j1k10^m} \) minimizes $|k|$, the inequality in equation~\ref{eq:firstequiv} holds for \({ \rho = \rho_{j1k0}} \) and \( {\rho = \rho_{j1k1}}\), i.e.\
\[ \nu (\rho_{j1k0}  \cdot_L (G_j \cup I_{j1})) \leq \rho_{j1k0}  \cdot \nu (I_{j1})\]
  and
\[ \nu (\rho_{j1k1} \cdot_L (G_j \cup I_{j1})) \leq \rho_{j1k1} \cdot \nu (I_{j1})\]
   This implies that the line segment between the two points 
\[  (\rho_{j1k0}, \nu (\rho_{j1k0} \cdot_L (G_j \cup I_{j1}))) \quad \text{and} \quad (\rho_{j1k1}, \nu (\rho_{j1k1} \cdot_L (G_j \cup I_{j1}))) \]
 lies completely below the line \( \rho \cdot \nu(I_{j1})\)  for \( \rho \in [\rho_{j1k0},\rho_{j1k1}] \)., i.e.\ we have
\begin{equation*}
\begin{split}
\nu& (\rho_{j1k0} \cdot_L (G_j \cup I_{j1})) + \frac{\rho - \rho_{j1k0}}{\rho_{j1k1} - \rho_{j1k0}} \times \\
&\bigl(  \nu (\rho_{j1k1} \cdot_L (G_j \cup I_{j1})) -  \nu (\rho_{j1k0} \cdot_L (G_j \cup I_{j1})) \bigr)  <  \rho \cdot \nu(I_{j1})
\end{split}
\end{equation*}
  for all  \( \rho \in [\rho_{j1k0},\rho_{j1k1}] \). Noting that 
\[ 
\nu (\rho_{j1k1} \cdot_L (G_j \cup I_{j1})) -  \nu (\rho_{j1k0} \cdot_L (G_j \cup I_{j1}))  = \nu(I_{j1k1}) 
\]
and using equation~\ref{eq:noti} yields
\begin{equation*}
  \nu (\rho_{j1k0} \cdot_L (G_j \cup I_{j1})) + \frac{\rho - \rho_{j1k0}}{\rho_{j1k1} - \rho_{j1k0}} \cdot   \nu(I_{j1k1}) < \nu \left(\rho \cdot_L (G_j \cup I_{j1}) \right).
\end{equation*}
  Now set \( \rho = \rho_{j1k10^m}\). Then \( \rho \in [\rho_{j1k0},\rho_{j1k1}] \) and
\begin{equation*}
\begin{split}
&\hspace{-1em} \nu \left(\rho \cdot_L (G_j \cup I_{j1}) \right)-   \nu (\rho_{j0k0} \cdot_L (G_j \cup I_{j1})) 
\\
&=  \nu \left(\rho_{j1k10^m} \cdot_L (G_j \cup I_{j1}) \right)-   \nu (\rho_{j0k0} \cdot_L (G_j \cup I_{j1})) 
\\
&= \nu(I_{jk10^{m}})
= \frac{1}{2^{m}}  \nu  (I_{j1k1}).
\end{split}
\end{equation*}
  Also, 
\begin{equation*}
\frac{\rho - \rho_{j1k0}}{\rho_{j1k1 - \rho_{j1k0}}} 
= \frac{\rho_{j1k10^m} - \rho_{j1k0}}{\rho_{j1k1 - \rho_{j1k0}}}
= \frac{|G_{j1k} \cup I_{j1k10^{m}}|}{|G_{j1k} \cup I_{j1k1}|}.
\end{equation*}
  Combining the last three equations and dividing by \( \nu (I_{j1k1})\) we obtain
\begin{equation*}
\frac{1}{2^{m}} > \frac{|G_{j1k} \cup I_{j1k10^{m}}|}{|G_{j1k} \cup I_{j1k1}|}.
\end{equation*}
This means that (ii) must be false if (i) is false, which finishes the proof of the lemma.

\end{proof}

In addition to the lemma above, in the proof of theorem \ref{thm:general-1-thm} we will need a lemma which is sometimes called the \textit{mass~distribution principle}. In this paper, we will only use the mass distribution principle for Cantor measures.

\begin{lemma}[The mass distribution principle]
  Let \( \nu \) be a Cantor measure, \( E \subseteq \mathbb{R}\), \( h(\xi, \delta) \) a gauge function and \( q, \varepsilon>0 \) positive numbers such that \(
 h\left(I\right) \geq q \cdot \nu \left(I \right) 
\) for all intervals \( I \) with diameter less that \( \varepsilon \) contained in \( (1+\varepsilon)E \). Then \(
\mu_h \left(E \cap C \right) \geq q \cdot \nu\left( E\right)  
\).
\end{lemma}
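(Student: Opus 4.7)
The plan is to run the standard mass distribution argument, adapted to the midpoint-dependent gauge. The only facts I need beyond the hypothesis are that the Cantor measure $\nu$ is supported on $C$, so that $\nu(E) = \nu(E \cap C)$, and that $\nu$ is countably subadditive.

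First I would fix a $\delta > 0$ small enough that $\delta < \varepsilon$ and, in addition, small enough that every interval of diameter at most $\delta$ meeting $E$ lies inside $(1+\varepsilon) E$. Given any $\delta$-cover $\{I(w_k, \delta_k)\}$ of $E \cap C$ by intervals, I would discard the intervals that do not meet $E \cap C$; this only decreases $\sum_k h(w_k, \delta_k)$. Each remaining covering interval then lies in $(1+\varepsilon) E$ and has diameter less than $\varepsilon$, so the hypothesis applies termwise to give $h(w_k, \delta_k) \geq q \cdot \nu\bigl(I(w_k, \delta_k)\bigr)$.

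Summing, using countable subadditivity of $\nu$, and the fact that $\nu$ is supported on $C$, I would obtain
\begin{equation*}
\sum_k h(w_k, \delta_k) \;\geq\; q \sum_k \nu\bigl(I(w_k, \delta_k)\bigr) \;\geq\; q \cdot \nu\Bigl(\bigcup_k I(w_k, \delta_k)\Bigr) \;\geq\; q \cdot \nu(E \cap C) \;=\; q \cdot \nu(E).
\end{equation*}
Taking the infimum over all such $\delta$-covers and then letting $\delta \to 0$ yields $\mu_h(E \cap C) \geq q \cdot \nu(E)$, as required.

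The only substantive point is the geometric one used to choose $\delta$: for $\delta$ small enough, any $\delta$-interval meeting $E$ must be contained in $(1+\varepsilon) E$. Since this lemma is applied in the proof of Theorem~\ref{thm:general-1-thm} only with $E$ a closed interval and $(1+\varepsilon) E$ interpreted as the concentric dilation $a \cdot I$ defined in the introduction, this reduces to the trivial observation that an interval of diameter at most $\tfrac{\varepsilon}{2} |E|$ that meets $E$ lies inside $(1+\varepsilon) E$. I do not expect any real difficulty elsewhere; the argument is essentially a transcription of the classical mass distribution principle.
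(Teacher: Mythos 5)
Your proof is correct and follows essentially the same route as the paper: apply the hypothesis $h(I) \geq q\cdot\nu(I)$ termwise to a $\delta$-cover and invoke countable subadditivity of $\nu$. In fact your version is slightly more careful than the paper's, which covers $E$ rather than $E \cap C$ and does not address why the covering intervals lie in $(1+\varepsilon)E$; your choice of $\delta \leq \tfrac{\varepsilon}{2}|E|$ and the identity $\nu(E) = \nu(E\cap C)$ fill exactly those gaps.
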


\begin{proof}[Proof of the mass distribution principle]
  Fix \( \delta < \varepsilon \) and let \( \{ I_k\}_{k \in K} \) be an arbitrarily chosen \hbox{\( \delta \)-covering} of \( E \). Then
\begin{equation*}
\sum_{k \in K} h\left( I_k\right) \geq \sum_{k \in K} q \cdot \nu \left(I_k \right) \geq q \cdot  \nu \left(E \right)
\end{equation*}
  since \( E \subset \bigcup_{k \in K} I_k \). By letting \( \delta \to 0 \), we get \( \mu_h \left( E \cap C \right) \geq q \cdot  \nu\left(E\right) \).
\end{proof}

We now proceed to the proof of our main theorem.
\begin{proof}[Proof of theorem \ref{thm:general-1-thm}]
  For the upper bound on \( \mu_h(J \cap C) \), consider the covering of \( J \cap C \) with the basic intervals \( I_j \) from some fixed step \( k \) of the construction which intersects \( J \), i.e.\  all basic intervals \( I_j \) associated to \( C \) for which \( I_j \cap J \not = \emptyset \) and \( |j|=k \). Then
  \begin{equation*}
    \begin{split}
      \mu_h (J \cap C) 
      \leq&
      \lim_{k \to \infty} \sum_{\substack{|j|=k\\[0.5mm] I_j \cap J \not = \emptyset}} h(I_j) 
       \leq
 \lim_{k \to \infty} \sum_{\substack{|j|=k
      \\[0.5mm]
      I_j \cap J \not = \emptyset}} r \cdot  \nu(I_j) 
      \\
      =& \lim_{k \to \infty} r \cdot \nu \left( \bigcup_{\substack{|j|=k\\[0.5mm]I_j \cap J \not = \emptyset}} I_j \right) 
    \leq  
\lim_{k \to \infty}r \cdot \nu \left( \bigcup_{\substack{|j|=k
      \\[0.5mm]
      I_j \cap \partial J \not = \emptyset}} I_j \right) 
+ r \cdot \nu \left( J \right) 
    \end{split}
  \end{equation*}
  As at most two basic intervals from any fixed step \( k \) of the construction can intersect \( \partial J \) and \( \nu(I_j) = 2^{-|j|} \) for any basic interval, we get
  \begin{equation*}
    \mu_h (J \cap C) \leq 
    \lim_{k \to \infty} r \cdot \nu \left( J \right) + r \cdot \nu \, \Bigl(\! \bigcup_{\substack{|j|=k\\[0.5mm] I_j \cap \partial J \not = \emptyset}} I_j \Bigr)  \leq 
    \lim_{k \to \infty} r \cdot \nu \left( J \right) + r \cdot 2 \cdot 2^{-k} = 
    r \cdot \nu \left( J \right) 
  \end{equation*}

  We will now show that the lower bound in equation~\ref{eq:important} holds, i.e.\ we will show that 
  \[ {\mu_h(J\cap C) \geq \bigl( q - (r-q) \bigr) \cdot \nu(  J ) }\]
  To do this we will use the mass distribution principle after showing that 
  \( h(I) \geq  \bigl( q - (r-q) \bigr) \cdot  \nu(I) \)
  for all intervals \( I \subseteq J(1 + \varepsilon) \) with \( |I|< \Delta \) for some small \( \Delta>0 \). As \( h \) is a gauge function,  \( h \) is increasing as an interval function and it is therefore enough to consider the case when \( I \) is a near basic interval.

To this end, pick \( \Delta_0 \) small enough for the assumptions of the theorem to hold for all intervals with diameter less than \( \Delta_0 \). As $ |I_j| \to 0$ when $|j| \to 0$, there exists $ k \in \mathbb{N}$ such that $ \max_{|j|>k} |I_j| \leq \Delta_0$. Fix any such $ k$ and set  $\Delta = \min_{|j| \leq k} |G_j|$. Now let \( I = I(w,\delta)\) be any near basic interval associated with \( C \) with \( |I|<\Delta \). If \( j_1 \) and \( j_1 \) are two binary words, we say a the gap \( G_{j_1} \) is older than a gap \( G_{j_2} \) if \( |j_1|>|j_2| \). Let \( G_j \) be the oldest gap which is a subset of \( I \). Since \( G_j \) is the oldest gap contained in \( I \) and \( I \) is a near basic interval, \( I \subseteq I_j \). The choice of \( \Delta \) ensures that the diameter of \( I_j \) is smaller than \( \Delta_0 \), which enables us to use all the assumptions of the theorem in the reasoning below.

  To simplify notations, set \( J_0 = I\cap I_{j0} \) and \( J_1 = I \cap I_{j1} \) and note that \( I \cap C \subseteq J_0 \cup J_1 \). 
  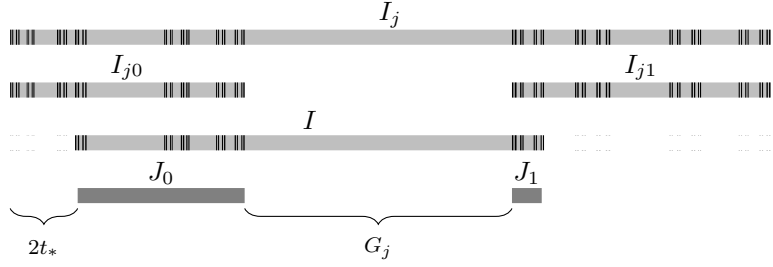
\begin{figure}[h]
    \centering
\begin{tikzpicture}

\node at (1.5422,0.3000) {$I_{j0}$};
\draw[line width=2mm,color=lightgray] (0.0000,0.0000) -- (3.0844,0.0000);

\node at (8.3001,0.3000) {$I_{j1}$};
\draw[line width=2mm,color=lightgray] (6.6002,0.0000) -- (10.0000,0.0000);

\draw[line width=2mm] (0.0000,0.0000) -- (0.0156,0.0000);
\draw[line width=2mm] (0.0275,0.0000) -- (0.0411,0.0000);
\draw[line width=2mm] (0.0762,0.0000) -- (0.0919,0.0000);
\draw[line width=2mm] (0.1077,0.0000) -- (0.1233,0.0000);
\draw[line width=2mm] (0.2181,0.0000) -- (0.2291,0.0000);
\draw[line width=2mm] (0.2393,0.0000) -- (0.2496,0.0000);
\draw[line width=2mm] (0.2838,0.0000) -- (0.2963,0.0000);
\draw[line width=2mm] (0.3057,0.0000) -- (0.3199,0.0000);
\draw[line width=2mm] (0.6164,0.0000) -- (0.6339,0.0000);
\draw[line width=2mm] (0.6469,0.0000) -- (0.6665,0.0000);
\draw[line width=2mm] (0.7005,0.0000) -- (0.7180,0.0000);
\draw[line width=2mm] (0.7339,0.0000) -- (0.7488,0.0000);
\draw[line width=2mm] (0.8537,0.0000) -- (0.8734,0.0000);
\draw[line width=2mm] (0.8884,0.0000) -- (0.9079,0.0000);
\draw[line width=2mm] (0.9495,0.0000) -- (0.9676,0.0000);
\draw[line width=2mm] (0.9874,0.0000) -- (1.0055,0.0000);
\draw[line width=2mm] (2.0260,0.0000) -- (2.0408,0.0000);
\draw[line width=2mm] (2.0507,0.0000) -- (2.0636,0.0000);
\draw[line width=2mm] (2.1037,0.0000) -- (2.1168,0.0000);
\draw[line width=2mm] (2.1260,0.0000) -- (2.1376,0.0000);
\draw[line width=2mm] (2.2452,0.0000) -- (2.2623,0.0000);
\draw[line width=2mm] (2.2739,0.0000) -- (2.2900,0.0000);
\draw[line width=2mm] (2.3164,0.0000) -- (2.3327,0.0000);
\draw[line width=2mm] (2.3441,0.0000) -- (2.3590,0.0000);
\draw[line width=2mm] (2.7087,0.0000) -- (2.7223,0.0000);
\draw[line width=2mm] (2.7346,0.0000) -- (2.7482,0.0000);
\draw[line width=2mm] (2.7860,0.0000) -- (2.8003,0.0000);
\draw[line width=2mm] (2.8136,0.0000) -- (2.8304,0.0000);
\draw[line width=2mm] (2.9533,0.0000) -- (2.9694,0.0000);
\draw[line width=2mm] (2.9820,0.0000) -- (2.9957,0.0000);
\draw[line width=2mm] (3.0403,0.0000) -- (3.0545,0.0000);
\draw[line width=2mm] (3.0669,0.0000) -- (3.0844,0.0000);
\draw[line width=2mm] (6.6002,0.0000) -- (6.6186,0.0000);
\draw[line width=2mm] (6.6364,0.0000) -- (6.6598,0.0000);
\draw[line width=2mm] (6.7069,0.0000) -- (6.7250,0.0000);
\draw[line width=2mm] (6.7377,0.0000) -- (6.7529,0.0000);
\draw[line width=2mm] (6.8853,0.0000) -- (6.9003,0.0000);
\draw[line width=2mm] (6.9135,0.0000) -- (6.9281,0.0000);
\draw[line width=2mm] (6.9768,0.0000) -- (6.9914,0.0000);
\draw[line width=2mm] (7.0051,0.0000) -- (7.0219,0.0000);
\draw[line width=2mm] (7.4297,0.0000) -- (7.4462,0.0000);
\draw[line width=2mm] (7.4587,0.0000) -- (7.4779,0.0000);
\draw[line width=2mm] (7.5189,0.0000) -- (7.5379,0.0000);
\draw[line width=2mm] (7.5546,0.0000) -- (7.5719,0.0000);
\draw[line width=2mm] (7.7097,0.0000) -- (7.7286,0.0000);
\draw[line width=2mm] (7.7448,0.0000) -- (7.7655,0.0000);
\draw[line width=2mm] (7.8341,0.0000) -- (7.8525,0.0000);
\draw[line width=2mm] (7.8721,0.0000) -- (7.8922,0.0000);
\draw[line width=2mm] (8.6705,0.0000) -- (8.6877,0.0000);
\draw[line width=2mm] (8.7015,0.0000) -- (8.7160,0.0000);
\draw[line width=2mm] (8.7705,0.0000) -- (8.7870,0.0000);
\draw[line width=2mm] (8.8007,0.0000) -- (8.8170,0.0000);
\draw[line width=2mm] (8.9603,0.0000) -- (8.9794,0.0000);
\draw[line width=2mm] (8.9942,0.0000) -- (9.0106,0.0000);
\draw[line width=2mm] (9.0469,0.0000) -- (9.0598,0.0000);
\draw[line width=2mm] (9.0729,0.0000) -- (9.0874,0.0000);
\draw[line width=2mm] (9.5819,0.0000) -- (9.5966,0.0000);
\draw[line width=2mm] (9.6125,0.0000) -- (9.6263,0.0000);
\draw[line width=2mm] (9.6674,0.0000) -- (9.6823,0.0000);
\draw[line width=2mm] (9.6988,0.0000) -- (9.7125,0.0000);
\draw[line width=2mm] (9.8541,0.0000) -- (9.8714,0.0000);
\draw[line width=2mm] (9.8878,0.0000) -- (9.9030,0.0000);
\draw[line width=2mm] (9.9471,0.0000) -- (9.9653,0.0000);
\draw[line width=2mm] (9.9812,0.0000) -- (10.0000,0.0000);

\node at (3.9399,-0.4000) {$I$};
\draw[line width=2mm,color=lightgray] (0.8884,-0.7000) -- (6.9914,-0.7000);
\draw[line width=2mm] (0.0000,-0.7000) -- (0.0156,-0.7000);
\draw[line width=2mm] (0.0275,-0.7000) -- (0.0411,-0.7000);
\draw[line width=2mm] (0.0762,-0.7000) -- (0.0919,-0.7000);
\draw[line width=2mm] (0.1077,-0.7000) -- (0.1233,-0.7000);
\draw[line width=2mm] (0.2181,-0.7000) -- (0.2291,-0.7000);
\draw[line width=2mm] (0.2393,-0.7000) -- (0.2496,-0.7000);
\draw[line width=2mm] (0.2838,-0.7000) -- (0.2963,-0.7000);
\draw[line width=2mm] (0.3057,-0.7000) -- (0.3199,-0.7000);
\draw[line width=2mm] (0.6164,-0.7000) -- (0.6339,-0.7000);
\draw[line width=2mm] (0.6469,-0.7000) -- (0.6665,-0.7000);
\draw[line width=2mm] (0.7005,-0.7000) -- (0.7180,-0.7000);
\draw[line width=2mm] (0.7339,-0.7000) -- (0.7488,-0.7000);
\draw[line width=2mm] (0.8537,-0.7000) -- (0.8734,-0.7000);
\draw[line width=2mm] (0.8884,-0.7000) -- (0.9079,-0.7000);
\draw[line width=2mm] (0.9495,-0.7000) -- (0.9676,-0.7000);
\draw[line width=2mm] (0.9874,-0.7000) -- (1.0055,-0.7000);
\draw[line width=2mm] (2.0260,-0.7000) -- (2.0408,-0.7000);
\draw[line width=2mm] (2.0507,-0.7000) -- (2.0636,-0.7000);
\draw[line width=2mm] (2.1037,-0.7000) -- (2.1168,-0.7000);
\draw[line width=2mm] (2.1260,-0.7000) -- (2.1376,-0.7000);
\draw[line width=2mm] (2.2452,-0.7000) -- (2.2623,-0.7000);
\draw[line width=2mm] (2.2739,-0.7000) -- (2.2900,-0.7000);
\draw[line width=2mm] (2.3164,-0.7000) -- (2.3327,-0.7000);
\draw[line width=2mm] (2.3441,-0.7000) -- (2.3590,-0.7000);
\draw[line width=2mm] (2.7087,-0.7000) -- (2.7223,-0.7000);
\draw[line width=2mm] (2.7346,-0.7000) -- (2.7482,-0.7000);
\draw[line width=2mm] (2.7860,-0.7000) -- (2.8003,-0.7000);
\draw[line width=2mm] (2.8136,-0.7000) -- (2.8304,-0.7000);
\draw[line width=2mm] (2.9533,-0.7000) -- (2.9694,-0.7000);
\draw[line width=2mm] (2.9820,-0.7000) -- (2.9957,-0.7000);
\draw[line width=2mm] (3.0403,-0.7000) -- (3.0545,-0.7000);
\draw[line width=2mm] (3.0669,-0.7000) -- (3.0844,-0.7000);
\draw[line width=2mm] (6.6002,-0.7000) -- (6.6186,-0.7000);
\draw[line width=2mm] (6.6364,-0.7000) -- (6.6598,-0.7000);
\draw[line width=2mm] (6.7069,-0.7000) -- (6.7250,-0.7000);
\draw[line width=2mm] (6.7377,-0.7000) -- (6.7529,-0.7000);
\draw[line width=2mm] (6.8853,-0.7000) -- (6.9003,-0.7000);
\draw[line width=2mm] (6.9135,-0.7000) -- (6.9281,-0.7000);
\draw[line width=2mm] (6.9768,-0.7000) -- (6.9914,-0.7000);
\draw[line width=2mm] (7.0051,-0.7000) -- (7.0219,-0.7000);
\draw[line width=2mm] (7.4297,-0.7000) -- (7.4462,-0.7000);
\draw[line width=2mm] (7.4587,-0.7000) -- (7.4779,-0.7000);
\draw[line width=2mm] (7.5189,-0.7000) -- (7.5379,-0.7000);
\draw[line width=2mm] (7.5546,-0.7000) -- (7.5719,-0.7000);
\draw[line width=2mm] (7.7097,-0.7000) -- (7.7286,-0.7000);
\draw[line width=2mm] (7.7448,-0.7000) -- (7.7655,-0.7000);
\draw[line width=2mm] (7.8341,-0.7000) -- (7.8525,-0.7000);
\draw[line width=2mm] (7.8721,-0.7000) -- (7.8922,-0.7000);
\draw[line width=2mm] (8.6705,-0.7000) -- (8.6877,-0.7000);
\draw[line width=2mm] (8.7015,-0.7000) -- (8.7160,-0.7000);
\draw[line width=2mm] (8.7705,-0.7000) -- (8.7870,-0.7000);
\draw[line width=2mm] (8.8007,-0.7000) -- (8.8170,-0.7000);
\draw[line width=2mm] (8.9603,-0.7000) -- (8.9794,-0.7000);
\draw[line width=2mm] (8.9942,-0.7000) -- (9.0106,-0.7000);
\draw[line width=2mm] (9.0469,-0.7000) -- (9.0598,-0.7000);
\draw[line width=2mm] (9.0729,-0.7000) -- (9.0874,-0.7000);
\draw[line width=2mm] (9.5819,-0.7000) -- (9.5966,-0.7000);
\draw[line width=2mm] (9.6125,-0.7000) -- (9.6263,-0.7000);
\draw[line width=2mm] (9.6674,-0.7000) -- (9.6823,-0.7000);
\draw[line width=2mm] (9.6988,-0.7000) -- (9.7125,-0.7000);
\draw[line width=2mm] (9.8541,-0.7000) -- (9.8714,-0.7000);
\draw[line width=2mm] (9.8878,-0.7000) -- (9.9030,-0.7000);
\draw[line width=2mm] (9.9471,-0.7000) -- (9.9653,-0.7000);
\draw[line width=2mm] (9.9812,-0.7000) -- (10.0000,-0.7000);
\draw[line width=2mm,color=white] (-0.1000,-0.7000) -- (0.8384,-0.7000);
\draw[line width=2mm,color=white] (7.0414,-0.7000) -- (10.1000,-0.7000);

\node at (5,1.0000) {$I_{j}$};
\draw[line width=2mm,color=lightgray] (0,0.7000) -- (10.0000,0.7000);
\draw[line width=2mm] (0.0000,0.7000) -- (0.0156,0.7000);
\draw[line width=2mm] (0.0275,0.7000) -- (0.0411,0.7000);
\draw[line width=2mm] (0.0762,0.7000) -- (0.0919,0.7000);
\draw[line width=2mm] (0.1077,0.7000) -- (0.1233,0.7000);
\draw[line width=2mm] (0.2181,0.7000) -- (0.2291,0.7000);
\draw[line width=2mm] (0.2393,0.7000) -- (0.2496,0.7000);
\draw[line width=2mm] (0.2838,0.7000) -- (0.2963,0.7000);
\draw[line width=2mm] (0.3057,0.7000) -- (0.3199,0.7000);
\draw[line width=2mm] (0.6164,0.7000) -- (0.6339,0.7000);
\draw[line width=2mm] (0.6469,0.7000) -- (0.6665,0.7000);
\draw[line width=2mm] (0.7005,0.7000) -- (0.7180,0.7000);
\draw[line width=2mm] (0.7339,0.7000) -- (0.7488,0.7000);
\draw[line width=2mm] (0.8537,0.7000) -- (0.8734,0.7000);
\draw[line width=2mm] (0.8884,0.7000) -- (0.9079,0.7000);
\draw[line width=2mm] (0.9495,0.7000) -- (0.9676,0.7000);
\draw[line width=2mm] (0.9874,0.7000) -- (1.0055,0.7000);
\draw[line width=2mm] (2.0260,0.7000) -- (2.0408,0.7000);
\draw[line width=2mm] (2.0507,0.7000) -- (2.0636,0.7000);
\draw[line width=2mm] (2.1037,0.7000) -- (2.1168,0.7000);
\draw[line width=2mm] (2.1260,0.7000) -- (2.1376,0.7000);
\draw[line width=2mm] (2.2452,0.7000) -- (2.2623,0.7000);
\draw[line width=2mm] (2.2739,0.7000) -- (2.2900,0.7000);
\draw[line width=2mm] (2.3164,0.7000) -- (2.3327,0.7000);
\draw[line width=2mm] (2.3441,0.7000) -- (2.3590,0.7000);
\draw[line width=2mm] (2.7087,0.7000) -- (2.7223,0.7000);
\draw[line width=2mm] (2.7346,0.7000) -- (2.7482,0.7000);
\draw[line width=2mm] (2.7860,0.7000) -- (2.8003,0.7000);
\draw[line width=2mm] (2.8136,0.7000) -- (2.8304,0.7000);
\draw[line width=2mm] (2.9533,0.7000) -- (2.9694,0.7000);
\draw[line width=2mm] (2.9820,0.7000) -- (2.9957,0.7000);
\draw[line width=2mm] (3.0403,0.7000) -- (3.0545,0.7000);
\draw[line width=2mm] (3.0669,0.7000) -- (3.0844,0.7000);
\draw[line width=2mm] (6.6002,0.7000) -- (6.6186,0.7000);
\draw[line width=2mm] (6.6364,0.7000) -- (6.6598,0.7000);
\draw[line width=2mm] (6.7069,0.7000) -- (6.7250,0.7000);
\draw[line width=2mm] (6.7377,0.7000) -- (6.7529,0.7000);
\draw[line width=2mm] (6.8853,0.7000) -- (6.9003,0.7000);
\draw[line width=2mm] (6.9135,0.7000) -- (6.9281,0.7000);
\draw[line width=2mm] (6.9768,0.7000) -- (6.9914,0.7000);
\draw[line width=2mm] (7.0051,0.7000) -- (7.0219,0.7000);
\draw[line width=2mm] (7.4297,0.7000) -- (7.4462,0.7000);
\draw[line width=2mm] (7.4587,0.7000) -- (7.4779,0.7000);
\draw[line width=2mm] (7.5189,0.7000) -- (7.5379,0.7000);
\draw[line width=2mm] (7.5546,0.7000) -- (7.5719,0.7000);
\draw[line width=2mm] (7.7097,0.7000) -- (7.7286,0.7000);
\draw[line width=2mm] (7.7448,0.7000) -- (7.7655,0.7000);
\draw[line width=2mm] (7.8341,0.7000) -- (7.8525,0.7000);
\draw[line width=2mm] (7.8721,0.7000) -- (7.8922,0.7000);
\draw[line width=2mm] (8.6705,0.7000) -- (8.6877,0.7000);
\draw[line width=2mm] (8.7015,0.7000) -- (8.7160,0.7000);
\draw[line width=2mm] (8.7705,0.7000) -- (8.7870,0.7000);
\draw[line width=2mm] (8.8007,0.7000) -- (8.8170,0.7000);
\draw[line width=2mm] (8.9603,0.7000) -- (8.9794,0.7000);
\draw[line width=2mm] (8.9942,0.7000) -- (9.0106,0.7000);
\draw[line width=2mm] (9.0469,0.7000) -- (9.0598,0.7000);
\draw[line width=2mm] (9.0729,0.7000) -- (9.0874,0.7000);
\draw[line width=2mm] (9.5819,0.7000) -- (9.5966,0.7000);
\draw[line width=2mm] (9.6125,0.7000) -- (9.6263,0.7000);
\draw[line width=2mm] (9.6674,0.7000) -- (9.6823,0.7000);
\draw[line width=2mm] (9.6988,0.7000) -- (9.7125,0.7000);
\draw[line width=2mm] (9.8541,0.7000) -- (9.8714,0.7000);
\draw[line width=2mm] (9.8878,0.7000) -- (9.9030,0.7000);
\draw[line width=2mm] (9.9471,0.7000) -- (9.9653,0.7000);
\draw[line width=2mm] (9.9812,0.7000) -- (10.0000,0.7000);

\draw[line width=2mm,color=gray] (0.8884,-1.4000) -- (3.0844,-1.4000);
\node at (1.9864,-1.1000) {$J_0$};
\draw[line width=2mm,color=gray] (6.6002,-1.4000) -- (6.9914,-1.4000);
\node at (6.7958,-1.1000) {$J_1$};
\draw [decorate,decoration={brace,amplitude=6pt,mirror,raise=2pt},yshift=0pt](0.0000,-1.5000) -- (0.8884,-1.5000) node [black,midway,yshift=-0.6cm] {\footnotesize $2t_*$};\draw [decorate,decoration={brace,amplitude=6pt,mirror,raise=2pt},yshift=0pt](3.0844,-1.5000) -- (6.6002,-1.5000) node [black,midway,yshift=-0.6cm] {\footnotesize $G_j$};
\end{tikzpicture}

    \caption{The image above shows some of the notations used in the proof. The black parts inside the light grey intervals are some of the basic intervals of the Cantor set. Note that the endpoints of \( I \) coincide with the endpoints of basic intervals and also that \( I \) must be contained in \( I_j \) since if it was not, \( G_j \) would not be the oldest gap in \( I \). Note also that the right endpoint of \( I_{j0} \) and \( J_0 \) coincide.}
  \end{figure}
  Let \( w \) be the midpoint of \( J_0 \) and \( \delta = |J_0| \) and consider the function
  \begin{equation*}
    f(t_0,t_1) =  h(w-t_0+t_1, \delta+2t_0+2t_1).
  \end{equation*}
  As $h$ is increasing as an interval function, by the definition of $f(t_0,t_1)$ we have $\frac{\partial f}{\partial t_0} \geq 0$ and $\frac{\partial f}{\partial t_1} \geq 0$. Also, by the third assumption, for sufficiently small \( t_0 \) and \( t_1 \), 
  \begin{equation*}
    \frac{\partial^2 f}{\partial t_0 \partial t_1}(t_0,t_1) = (-h_{11} + 4h_{22})|_{w-t_0+t_1, \delta+2t_0+2t_1} \overset{(\ref{eq:h1})}{\leq} 0
  \end{equation*}
  and
  \begin{equation*}
    \frac{\partial^2 f}{\partial t_1^2}(t_0,t_1) = (h_{11} + 4h_{12}+ 4h_{22})|_{w-t_0+t_1, \delta+2t_0+2t_1} \overset{(\ref{eq:h2})}{\leq} 0.
  \end{equation*}

  Since \( J_0  \) and \( I_{j0} \) have their right endpoint in common and \( J_0 \subseteq I_{j0} \) there exists a unique number \( t_* \in \mathbb{R}_+ \) such that \( I(w-t_*, \delta+2t_*) = I_{j0} \). Set \( f(t) = f(0,\frac{t}{2}) \) and \( f_*(t) = f(t_*,\frac{t}{2}) \). Then
  \begin{equation*}
    f_*(t) = f(t_*,\frac{t}{2}) = h(w - t_* + \frac{t}{2}, \delta + 2t_* + t) = h(w_{j0} + \frac{t}{2}, |I_{j0}| + t)
  \end{equation*}
  which implies 
  \begin{equation}
    f_*(0) = h(I_{j0}) = h(I_{j1})
    \label{eq:T1}
  \end{equation}
  and 
  \begin{equation}
    f_*(|G_j \cup I_{j1}|) = h(I_j) .
    \label{eq:T2}
  \end{equation}
  Similarly, 
  \begin{equation}
    f(0) = h(J_0) .
    \label{eq:fh}
  \end{equation}
  As \( \frac{ \partial^2 f}{\partial t_0 \partial t_1}  \leq 0 \),  \( \frac{\partial}{\partial t_1} f(t_0,t_1) \) decreases as \( t_0 \) increases for all \( t_1 \). This implies
  \begin{equation}
    f_*'(t)\leq f'(t) 
    \label{eq:ff}
  \end{equation}
  for all \( t \) which in turn implies \( f(t)-f(0) \geq f_*(t) - f_*(0) \) for all \( t \).

  Set \( T = |G_j \cup I_{j1}| \). Then
  \begin{equation}
    \begin{split}
      \hspace{-1em}f_*(T) - f_*(0)
    \overset{(\ref{eq:T1},\ref{eq:T2})}{=}&  h(I_j) - h(I_{j0})  
      \overset{(\ref{eq:qrinequality})}{\geq} q \cdot \nu(I_j) - r \cdot \nu(I_{j0}) 
      \\=&
      \,q \cdot \bigl( \nu (I_{j} ) - \nu(I_{j0})\bigr) - (r-q) \cdot \nu(I_{j0})  
      \\
      =& \,q \cdot \nu(I_{j1}) - (r-q) \cdot \nu(I_{j1})= 
      \bigl( q - (r-q)\bigr) \cdot \nu(I_{j1}).
    \end{split}
    \label{eq:eq}
  \end{equation}
  Since \( \frac{\partial^2f}{\partial t_1^2}\leq 0 \) and \( \frac{\partial f}{\partial t_1} \geq 0\), \( f_*'(t) \) is positive and decreasing. Using this we obtain
  \begin{equation}
    \begin{split}
      f(\rho T)-f(0) &= \int_0^{\rho  T} f'(t) \,  dt  \overset{(\ref{eq:ff})}{\geq}  \int_0^{\rho  T} f_*'(t) \,  dt  \\
      &\hspace{-2em}\overset{f_*' decreasing}{\geq}  \rho  \cdot \bigl( f_*(T) - f_*(0) \bigr) \overset{(\ref{eq:eq})}{\geq}  \bigl( q - (r - q)\bigr) \cdot \rho  \cdot \nu (I_{j1})
    \end{split}
    \label{eq:GOOD}
  \end{equation}
  for all \( \rho \in [0,1] \). Now fix \( \rho \in [0,1] \) as the unique number such that \( \rho T = |G_j \cup J_1| \), i.e.\ set \( \rho = \frac{|G_j \cup J_1|}{|G_j \cup I_{j1}|} \). Then 
  \begin{equation}
    \rho \cdot_L  (G_j \cup I_{j1}) = G_j \cup J_1.
    \label{eq:noname}
  \end{equation}
  By lemma~\ref{lem:staircase} and the second assumption, we have 
  \begin{equation}
    \rho \cdot \nu (I_{j1}) \geq \nu (\rho \cdot_L (G_j \cup I_{j1})).
    \label{eq:goodgoodeq2}
  \end{equation}
  Using this inequality and the previous equations  we get
  \begin{equation*}
    \begin{split}
      h (I) 
      = f(\rho) 
      &\overset{(\ref{eq:GOOD})}{\geq} f(0) +\bigl( q - (r - q)\bigr)\cdot \rho  \cdot \nu (I_{j1})  
      \\
      &\overset{(\ref{eq:fh})}{=} h(J_0) + \bigl( q - (r - q)\bigr)\cdot \rho  \cdot \nu (I_{j1}) 
      \\
      &\overset{(\ref{eq:goodgoodeq2})}{\geq} h(J_0) + \bigl( q - (r - q)\bigr) \cdot \nu (\rho \cdot_L  (G_j \cup I_{j1})) 
      \\
      &\overset{(\ref{eq:noname})}{=}  
      h(J_0) + \bigl( q - (r - q)\bigr) \cdot \nu (G_j \cup J_1) 
      \\
      &=h(J_0) + \bigl( q - (r - q)\bigr) \cdot \nu (J_1).
    \end{split}
  \end{equation*}

  As  we can repeat this procedure with \( J_0 \) instead of \( I \) arbitrarily many times and \( { h(w,\delta) \to 0} \) as \( \delta \to 0 \) for all $w$ we can conclude that
  \begin{equation*}
    h (I) \geq  \bigl( q - (r - q)\bigr) \cdot \nu (I).
  \end{equation*}
  This proves the theorem.

\end{proof}

\begin{remark} \label{rem:reverse}
  The symmetric theorem also holds, i.e.\ we can assume \(h_{11}-4h_{12}+4h_{22} \leq 0 \) and \( \frac{1}{2^m} \leq \frac{|G_j \cup I_{j01^m}|}{|G_j \cup I_{j0}|} \) instead of assuming \( h_{11}+4h_{12}+4h_{22} \leq 0 \) and \( \frac{1}{2^m} \leq \frac{|G_j \cup I_{j10^m}|}{|G_j \cup I_{j1}|} \).
\end{remark}

\section{Examples.}

We will end this paper with three examples which use Theorem~\ref{thm:general-1-thm} to calculate the exact Hausdorff measure of three Cantor sets studied in \cite{smooth_cantor_sets} and \cite{pCantorset_measure}.

\begin{example} \label{ex:1}
  In~\cite{pCantorset_measure},  Cabrielli, Molter, Paulauskas and Shonkwiler studied the Cantor sets \( C_p\) associated with the sequence of gap lengths 
  \begin{equation*}
    |G_l^k| = \frac{1}{(2^k + l)^p}
  \end{equation*}
  where \( p \) is any real number which is strictly larger than one. For the Hausdorff measure $\mu_h$ associated to the gauge function \( h(w, \delta) = \delta^{1/p} \) the following bounds were acquired (see~\cite{pCantorset_measure}, theorem~1.1)
  \begin{equation*}
    \frac{1}{8} \left( \frac{2^p}{2^p-2} \right)^{1/p} \!\!\!  \leq \mu_h(C_{p}) \leq \left( \frac{1}{p-1} \right)^{1/p}
  \end{equation*}

  We will show that  by using Corollary~\ref{cor:the_cor}, we can compute the exact value of \( \mu_h(C_{p}) \) for any \( p>1 \). As \( \delta^{1/p} \) is concave for any fixed \( p>1 \) and \( \{ G_l^k \} \) is a decreasing gap sequence, we only need to find good estimates of \( q \) and \( r \). To find such estimates we will need the following result from~\cite{pCantorset_measure}, which gives bounds for the length of the basic intervals associated to the Cantor sets considered.
  \begin{equation} \label{eq:lengthineq}
    \frac{2^p}{2^p-2} \cdot \left( \frac{1}{2^k+l+1} \right)^p \leq |I_l^k| \leq \frac{2^p}{2^p-2} \cdot \left( \frac{1}{2^k+l} \right)^p.
  \end{equation}

  We will now calculate estimates for the constants \(r\) and \(q\) in equation~\ref{eq:qrinequality}. To this end, note that if \( I_l^k \) and \( I_{l'}^{k'}  \) are any two  basic intervals associated with \( C_p \) with \( I_{l'}^{k'} \subseteq I_l^k \) we have
\begin{equation}
\frac{l'}{2^{k'}} \geq \frac{l}{2^k}
\label{eq:ll'relation}
\end{equation}
  and also, by the definition of the Cantor measure
\begin{equation}
\nu(I_{l'}^{k'}) = \frac{1}{2^{k'}} \quad \textrm{and} \quad \nu(I_{l}^{k}) = \frac{1}{2^{k}}.
\label{eq:cmeasure}
\end{equation}
  Using these observations, we get
\begin{equation*}
\begin{split}
h(I_{l'}^{k'}) &= |I_{l'}^{k'}|^{1/p} 
\\&\!\!\overset{(\ref{eq:lengthineq})}{\leq}
 \frac{2}{(2^p-2)^{1/p}} \cdot \frac{1}{2^{k'}+l'}  
\\&=
\frac{2}{(2^p-2)^{1/p}} \cdot \frac{1}{1+\frac{l'}{2^{k'}}} \cdot \frac{1}{2^{k'}}  
\\&\!\!\overset{(\ref{eq:cmeasure})}{=}
 \frac{2}{(2^p-2)^{1/p}} \cdot \frac{1}{1+\frac{l'}{2^{k'}}} \cdot \nu\left(I_{l'}^{k'}\right) 
\\&\!\!\overset{(\ref{eq:ll'relation})}{\leq} 
 \frac{2}{(2^p-2)^{1/p}} \cdot \frac{1}{1+\frac{l}{2^{k}}} \cdot \nu\left(I_{l'}^{k'}\right).
\end{split}
\end{equation*}
  Completely analogously, we get the lower limit
\begin{equation*}
 \frac{2}{(2^p-2)^{1/p}} \cdot \frac{1}{1+\frac{l+1}{2^{k}}} \cdot \nu(I_{l'}^{k'})  \leq h(I_{l'}^{k'}).
\end{equation*}
  Combining the upper and lower limit we obtain the following estimates of \(q\) and \(r\) for all basic intervals contained in \(I_l^k\).
\begin{equation}
 \frac{2}{(2^p-2)^{1/p}} \cdot \frac{1}{1+\frac{l+1}{2^{k}}} \cdot \nu(I_{l'}^{k'})  \leq h(I_{l'}^{k'}) \leq  \frac{2}{(2^p-2)^{1/p}} \cdot \frac{1}{1+\frac{l}{2^{k}}} \cdot \nu(I_{l'}^{k'}).
\label{eq:nu-h-inequalities}
\end{equation}
  This yields
\begin{equation*}
\begin{split}
\mu_h (C_p) =& \sum_{l=0}^{2^k-1}  m_h (C_p \cap I_l^k)  
\\
\overset{(\ref{eq:nu-h-inequalities})}{\leq} &
\sum_{l=0}^{2^k-1}  \frac{2}{(2^p-2)^{1/p}} \cdot \frac{1}{1 + \frac{l}{2^k}} \cdot \nu(I_l^k) 
\\
\overset{(\ref{eq:cmeasure})}{=}&
\frac{2}{(2^p-2)^{1/p}} \cdot \sum_{l=0}^{2^k-1}   \frac{1}{1 + \frac{l}{2^k}} \cdot \frac{1}{2^k}.
\end{split}
\end{equation*}
  Since this is true for all \( k \), and
\begin{equation*}
\lim_{k \to \infty} \sum_{l=0}^{2^k-1}   \frac{1}{1 + \frac{l}{2^k}} \cdot \frac{1}{2^k} = \int_0^1 \frac{dx}{1+x} = \left[\log (1+x)\right]_0^1 = \log 2 
\end{equation*}
  we get
\begin{equation}
m_{1/p} (C_p) \leq \frac{2 \log 2}{(2^p-2)^{1/p}}.
\label{eq:upper_p-limit}
\end{equation}
 Similarly for the lower limit;
\begin{equation*}
\begin{split}
m_{1/p} (C_p) =& \sum_{l=0}^{2^k-1}  m_{1/p} \left(C_p \cap I_l^k\right) 
\\
\overset{(\ref{eq:nu-h-inequalities})}{\geq}&
 \sum_{l=0}^{2^k-1}  \frac{2}{(2^p-2)^{1/p}} \cdot \left( \frac{1}{1 + \frac{l+1}{2^k}}  -    \left(  \frac{1}{1 + \frac{l}{2^k}}  - \frac{1}{1 + \frac{l+1}{2^k}}   \right) \right)  \cdot \nu(I_l^k) 
\\
\geq&
 \frac{2}{(2^p-2)^{1/p}} \cdot \sum_{l=0}^{2^k-1}   \left( \frac{1}{1 + \frac{l+1}{2^k}}  -   \frac{1}{2^k}    \right)  \cdot \nu(I_l^k)  
\\
\overset{(\ref{eq:cmeasure})}{=} &
\frac{2}{(2^p-2)^{1/p}} \cdot \sum_{l=0}^{2^k-1}   \left( \frac{1}{1 + \frac{l+1}{2^k}}  -   \frac{1}{2^k}    \right)  \cdot \frac{1}{2^k} 
\\
=&
\frac{2}{(2^p-2)^{1/p}} \cdot \sum_{l=0}^{2^k-1}   \frac{1}{1 + \frac{l+1}{2^k}}   \cdot \frac{1}{2^k} 
-
\frac{2}{(2^p-2)^{1/p}} \cdot   \frac{1}{2^k}.
\end{split}
\end{equation*}
  As 
\begin{equation*}
\begin{split}
\lim_{k \to \infty} 
\frac{2}{(2^p-2)^{1/p}} \cdot \sum_{l=0}^{2^k-1}   \frac{1}{1 + \frac{l+1}{2^k}}   \cdot \frac{1}{2^k} 
-
\frac{2}{(2^p-2)^{1/p}} \cdot   \frac{1}{2^k}   
\\
= \displaystyle \frac{2}{(2^p-2)^{1/p}} \cdot \int_0^1 \frac{1}{1+x}dx = \frac{2 \log 2}{(2^p-2)^{1/p}}
\end{split}
\end{equation*}
  we get the the lower limit
\begin{equation}
m_{1/p} (C_p) \geq \frac{2 \log 2}{(2^p-2)^{1/p}}.
\label{eq:lower_p-limit}
\end{equation}
  By combining equation~\ref{eq:upper_p-limit} and equation~\ref{eq:lower_p-limit} we can conclude that 
\begin{equation*}
m_{1/p} (C_p) =\frac{2 \log 2}{(2^p-2)^{1/p}}.
\end{equation*}
\end{example}

\begin{example} \label{ex:2}
  As a small variation of the Cantor sets studied in the previous example we can consider the Cantor sets \( C_{p,x} \) associated with the sequences of gap lengths 
\begin{equation}
|G_l^k| = \frac{1}{([x^k] + l)^p} 
\label{eq:gdef2}
\end{equation}
  where \( p>1 \) and \( x > 2 \). These sets were also studied in~\cite{pCantorset_measure} where Cabrielli, Molter, Mendevil, Paulauskas and Shonkwiler gave the bounds 
\begin{equation*}
0<m_{\frac{\log 2}{p \log x}}(C_{p,x}) \leq \left( \frac{4^p}{2^p-2}\right)^{\frac{\log 2}{p \log x}}
\end{equation*}
 As in the previous example, we will calculate the measure of these Cantor sets using Corollary~\ref{cor:the_cor}, for the gauge function \( h_{p,x}(w, \delta) = \delta^{\frac{\log 2}{p \log x}}\), for any fixed \( p>1 \) and \(x>2\), where \( x \) and \( p \) are the parameters for the Cantor set considered. As the gauge function is clearly concave and the gap sequence is decreasing, we only need to find estimates for \( q \) and \( r \).

We begin by calculating upper and lower bounds for \(|I_l^k|\) similar to those in equation~\ref{eq:lengthineq}.
\begin{equation}
\begin{split}
|I_l^k| 
=&
\sum_{h = 0}^\infty \sum_{j=0}^{2^h-1} \left|G^{k+h}_{2^hl+j}\right| 
\overset{(\ref{eq:gdef2})}{\leq}
\sum_{h = 0}^\infty \sum_{j=0}^{2^h-1} \left|G^{k+h}_{2^hl+0}\right| 
\\
\overset{(\ref{eq:gdef2})}{=}&
 \sum_0^\infty \frac{2^h}{(\lfloor x^{k+h}\rfloor  + l \cdot 2^h)^p} 
\leq
 \frac{1}{x^{kp}} \cdot \sum_{h=0}^{\infty} \frac{2^h}{\bigl( x^h - \frac{1}{x^k} \bigr)^p} 
\\  
\leq &
\frac{1}{x^{kp}} \cdot (1 + \varepsilon^{(1)}_k ) \cdot \sum_{h=0}^{\infty} \frac{2^h}{x^{ph}} = \frac{1}{x^{kp}} \cdot \left(1 + \varepsilon^{(1)}_k \right) \cdot \frac{1}{1 - \frac{2}{x^p}}
\end{split}
\label{eq:ex2upperbound}
\end{equation}
  where \( \varepsilon^{(1)}_k \) is a small positive number which tends to zero as \( k \to \infty \). Similarly, but by somewhat more tedious calculations, we obtain
\begin{equation}
\begin{split}
|I_l^k| =&  \sum_{h = 0}^\infty \sum_{j=0}^{2^h-1} \left|G^{k+h}_{2^hl+j}\right| \geq \sum_{h=0}^\infty \frac{2^h}{\left(\lfloor x^{k+h} \rfloor + l \cdot 2^h + 2^h - 1\right)^p} 
\\
\geq& \sum_{h=0}^\infty \frac{2^h}{(x^{k+h} + (l+1) \cdot 2^h )^p} 
\geq \frac{1}{x^{kp}} \cdot \sum_{h=0}^\infty \frac{2^h}{x^{hp} \cdot \left(1 + \frac{l+1}{x^k} \cdot \frac{2^h}{x^h}\right)^p} 
\\
\geq& 
\frac{1}{x^{kp}} \cdot \sum_{h=0}^\infty \frac{2^h}{x^{hp} \cdot \left(1 +  \frac{2^k}{x^k} \cdot \frac{2^h}{x^h}\right)^p}  
\geq \frac{1}{x^{kp}} \cdot \sum_{h=0}^\infty \frac{2^h}{x^{hp}} \cdot  \frac{1}{\left(1 +  \frac{2^k}{x^k} \right)^p} \\
\geq& \frac{1}{x^{kp}} \cdot \frac{1}{1 - \frac{2}{x^p}} \cdot \frac{1}{\left(1 + \frac{2^k}{x^k}\right)^p} = \frac{1}{x^{kp}} \cdot \frac{1}{1 - \frac{2}{x^p}} \cdot \left(1 - \varepsilon^{(0)}_k \right)
\end{split}
\label{eq:ex2lowerbound}
\end{equation}
  where \( \varepsilon^{(0)}_k \) is a small positive number which tends to zero as \( k \to \infty \). Summing up equations~\ref{eq:ex2upperbound}~and~\ref{eq:ex2lowerbound}, we have
\begin{equation*}
\frac{1}{x^{kp}} \cdot \frac{1}{1 - \frac{2}{x^p}} \cdot \left(1 - \varepsilon^{(0)}_k \right) 
\leq 
|I_l^k| 
\leq
 \frac{1}{x^{kp}} \cdot \left(1 + \varepsilon^{(1)}_k \right) \cdot \frac{1}{1 - \frac{2}{x^p}}.
\end{equation*}
  This implies
\begin{equation*}
\begin{split}
 \left( \frac{1}{1 - \frac{2}{x^p}} \right)^\frac{\log 2}{p \log x}  \cdot \left(1 - \varepsilon^{(0)}_k \right)^\frac{\log 2}{p \log x} \cdot \frac{1}{2^k} 
\leq
|I_l^k|^\frac{\log 2}{p \log x} 
\\
\leq
 \frac{1}{x^{kp}} \cdot \left(1 + \varepsilon^{(1)}_k \right)^\frac{\log 2}{p \log x} \cdot \left( \frac{1}{1 - \frac{2}{x^p}} \right)^\frac{\log 2}{p \log x}  \cdot  \frac{1}{2^k}
\end{split}
\end{equation*}
  which, by the definition of $h_{p,x}$ and \(\nu\) can be written as 
\begin{equation*}
\begin{split}
 \left( \frac{1}{1 - \frac{2}{x^p}} \right)^\frac{\log 2}{p \log x}  \cdot \left(1 - \varepsilon^{(0)}_k \right)^\frac{\log 2}{p \log x} \cdot \nu (I_l^k)
\leq
h_{p,x} (I_l^k)
\\
\leq
 \left(1 + \varepsilon^{(1)}_k \right)^\frac{\log 2}{p \log x} \cdot \left( \frac{1}{1 - \frac{2}{x^p}} \right)^\frac{\log 2}{p \log x}  \cdot  \nu (I_l^k).
\end{split}
\end{equation*}
  We can now use corollary~\ref{cor:the_cor} to conclude that
\begin{equation*}
\begin{split}
\left( \left(1 - \varepsilon_k^{(0)}\right)^\frac{\log 2}{p \log x} - \left( \left(1 + \varepsilon_k^{(1)}\right)^\frac{\log 2}{p \log x} -\left(1 - \varepsilon_k^{(0)}\right)^\frac{\log 2}{p \log x} \right) \right) \cdot \left( \frac{1}{1 - \frac{2}{x^p}}\right) ^\frac{\log 2}{p \log x}  
\\ \leq
m_\frac{\log 2}{p \log x} (C_{p,x}) 
\end{split}
\end{equation*}
  and 
\begin{equation*}
m_\frac{\log 2}{p \log x} (C_{p,x}) \leq 
\left(1 + \varepsilon_k^{(1)}\right)^\frac{\log 2}{p \log x} \cdot \left( \frac{1}{1 - \frac{2}{x^p}}\right) ^\frac{\log 2}{p \log x} .
\end{equation*}
  By letting \( k \) tend to infinity in the two previous equations, we get
\begin{equation*}
m_\frac{\log 2}{p \log x} (C_{p,x}) =
\left( \frac{1}{1 - \frac{2}{x^p}}\right) ^\frac{\log 2}{p \log x}.
\end{equation*}
\end{example}

\begin{example}\label{ex:3}
  Our main theorem, Theorem~\ref{thm:general-1-thm}, which we used indirectly when calculating the Hausdorff measure of the Cantor sets in the previous two examples, can with small modifications be used also to calculate the measures of the sets in the third and last family of Cantor sets mentioned in \cite{pCantorset_measure}, namely the Cantor sets \( C_{(p)}^{(n)} \sim \{ G_l^k \} \), where \( |G_l^k|  = \frac{1}{(2^k + l)^p} \) and \(p>1\) as in the first example, but where \( n -1\) open intervals are removed from each remaining interval in each step of the construction of the Cantor set instead of one. Small adjustments to Theorem~\ref{thm:general-1-thm} and its proof and similar calculations as  in examples \ref{ex:1} and \ref{ex:2}, although omitted here, give
\begin{equation*}
m_{1/p} (C_{(p)}^{(n)}) = \frac{n \log n}{(n^p-n)^{1/p}} \cdot \frac{1}{n-1}.
\end{equation*} 
\end{example}

\paragraph{Acknowledgement.}

The author expresses her gratitude to her advisor, Maria Roginskaya, for her valuable suggestions and for reading the manuscript.

\end{document}